\newtheorem{theorem}{Theorem}[section]
\newtheorem{lemma}[theorem]{Lemma}
\theoremstyle{definition}
\newtheorem{definition}[theorem]{Definition}
\newtheorem{example}[theorem]{Example}
\theoremstyle{remark}
\newtheorem{remark}[theorem]{Remark}
\numberwithin{equation}{section}
\begin{document}

\setcounter{page}{1}

\title[Decay estimates ...]{Decay estimates for the time-fractional evolution equations with time-dependent coefficients}

\author[A. G. Smadiyeva, B. T. Torebek]{Asselya G. Smadiyeva, Berikbol T. Torebek$^*$}

\address{\textcolor[rgb]{0.00,0.00,0.84}{Asselya G. Smadiyeva \newline Al--Farabi Kazakh National University \newline Al--Farabi ave. 71, 050040, Almaty, Kazakhstan \newline and \newline Institute of Mathematics and Mathematical Modeling \newline 125 Pushkin str.,
050010 Almaty, Kazakhstan}}
\email{\textcolor[rgb]{0.00,0.00,0.84}{smadiyeva@math.kz}}

\address{\textcolor[rgb]{0.00,0.00,0.84}{Berikbol T. Torebek \newline Department of Mathematics: Analysis,
Logic and Discrete Mathematics \newline Ghent University, Krijgslaan 281, Ghent, Belgium \newline and \newline Institute of
Mathematics and Mathematical Modeling \newline 125 Pushkin str.,
050010 Almaty, Kazakhstan}}
\email{\textcolor[rgb]{0.00,0.00,0.84}{berikbol.torebek@ugent.be}}

%\dedicatory{This paper is dedicated to Professor ABCD}
%\thanks{This research has been funded by the Science Committee of the Ministry of Education and Science of the Republic of Kazakhstan (Grant No. AP08052046). The research of Torebek is financially supported by the FWO Odysseus 1 grant G.0H94.18N: Analysis and Partial Differential Equations. No new data was collected or generated during the course of research.}

\let\thefootnote\relax\footnote{$^{*}$Corresponding author}

\subjclass[2010]{35R11, 35C10.}

\keywords{Caputo derivative, sub-diffusion equation, Kilbas-Saigo function, decay estimate.}

\begin{abstract} In this paper, the initial-boundary value problems for the time-fractional degenerate evolution equations are considered. Firstly, in the linear case, we obtain the optimal rates of decay estimates of the solutions. The decay estimates are also established for the time-fractional evolution equations with nonlinear operators such as: p-Laplacian, the porous medium operator, degenerate operator,  mean curvature operator, and Kirchhoff operator. At the end, some applications of the obtained results are given to derive the decay estimates of global solutions for the time-fractional Fisher-KPP-type equation and the time-fractional porous medium equation with the nonlinear source.
\end{abstract}
\maketitle
\tableofcontents
\section{Introduction}
\subsection{Statement of the problem}
The main purpose of this paper is to study the following time-fractional partial differential equation
\begin{equation}\label{1.1} \partial^{\alpha}_{0+,t} u(t, x) - a(t)\mathcal{A}(u(t, x)) = 0,\,\left({t, x}
\right) \in \mathbb{R}_+\times\Omega:=\Omega_+,\end{equation}
with the Cauchy data
\begin{equation}\label{1.2}
u(0,x)=u_0(x),\,x\in\Omega,
\end{equation}
and with the Dirichlet boundary condition
\begin{equation}\label{1.3}
u(t, x)=0,\,\,t\geq 0,\,x\in\partial\Omega,
\end{equation}
where $0<\alpha\leq 1,\,a\in L^1(\mathbb{R}_+),$ $\Omega\subset\mathbb{R}^n$ is a bounded domain with smooth boundary $\partial\Omega$, and $\partial^{\alpha}_{0+,t}$ is the Caputo fractional derivative \cite[P. 97]{1} of order $\alpha$, such that
$$\partial_{0+,t}^\alpha u(t, x) = \left\{\begin{array}{l}\frac{1}{{\Gamma \left(1- \alpha \right)}}\int\limits_0^t {\left(
{t - s} \right)^{-\alpha} \partial_s u\left(s, x\right)} ds,\,\,\quad\text{if}\,\,\,\,\alpha\in(0,1),\\{}\\ \partial_tu(x,t)=\frac{\partial u}{\partial t}(t, x),\,\,\qquad\qquad\,\,\,\quad\text{if}\,\,\,\,\alpha=1\end{array}\right.$$ and $\mathcal A(u)$ is one of the following linear and nonlinear operators:

\begin{itemize}
\item Laplace operator: $$\mathcal A(u):=\Delta u=\sum\limits_{j=1}^n\frac{\partial^2u}{\partial x_j^2};$$
\item $p$-Laplace operator: $$\mathcal A(u):=\Delta_p u=div\left(|\nabla u|^{p-2}\nabla u\right);$$
\item Porous medium operator: $$\mathcal A(u):=\nabla(g(u)\nabla u);$$
\item Degenerate elliptic operator: $$\mathcal A(u):=f(u)\Delta u;$$
\item  Mean curvature operator: $$\mathcal A(u):=div\left(\frac{\nabla u}{\sqrt{1+|\nabla u|^2}}\right);$$
\item Kirchhoff operator: $$\mathcal A(u):=M\left(\|\nabla u\|_{L^q}\right)\Delta_p u.$$
\end{itemize}

Everywhere below, when we consider the case $\alpha\in(0,1)$, we will assume that the following hypothesis holds:
\begin{description}
\item[(H)] $a(t)\geq \kappa t^\beta,\, \beta>-\alpha,\,\kappa>0.$
\end{description}
Assuming the existence of the considered linear and nonlinear problems, we are interested in the asymptotic behavior of the solutions.
\subsection{Historical background}
The linear equation of the form \eqref{1.1} with time-fractional derivative is called the sub-diffusion equation. This kind of equation describes the slow diffusion (see, for example \cite{Vald3, Hilfer, Main, Metz1, Tar, Uch}). In the case when $\alpha = 1/2$, the equation was interpreted by Nigmatullin \cite{Nig} within a percolation (pectinate) model.

There are a huge number of papers devoted to the study of the existence and uniqueness of the solutions (local or global) of initial-boundary value problems for the linear and nonlinear time-fractional diffusion equations (for example \cite{Caff, Kir1, Nieto, Dong, Yam1, Luch1, Yam2, Luch2, Zacher, Zacher1, Zacher2, Zacher3} and references therein).

In addition to the well-posedness of problems for the evolution equations, the asymptotic behavior of solutions is of particular interest. The asymptotic estimates for the decay of solutions for evolution equations with time-fractional derivatives were obtained in a series of papers (for example \cite{Vald1, Vald2, Luch1, Yam2, Zacher1} and references therein).

For example,  Vergara and Zacher in \cite{Zacher1} studied the decay estimates of solutions to the equations
\begin{equation*}\partial^{\alpha}_{0+,t} u(t, x) - \Delta u(t, x) = 0,\qquad\qquad\qquad\text{Sub-diffusion equation},\end{equation*}
\begin{equation*}\partial^{\alpha}_{0+,t} u(t, x) - \Delta_p u(t, x) = 0,\qquad\text{p-Laplace sub-diffusion equation}\end{equation*} and
\begin{equation*}\partial^{\alpha}_{0+,t} u(t, x) - \Delta u^m(t, x) = 0,\qquad\text{Fractional porous medium equation}.\end{equation*}
For the above equations, the following decay estimates were obtained:

$\bullet$ Solution of sub-diffusion equation: $$\|u(t, \cdot)\|_{L^2(\Omega)}\leq\frac{C}{1+t^{\alpha}},\,t\geq 0,\,C>0, \alpha\in(0,1);$$

$\bullet$ Solution of p-Laplace sub-diffusion equation: $$\|u(t, \cdot)\|_{L^2(\Omega)}\leq\frac{C}{1+t^{\frac{\alpha}{p-1}}},\,t\geq 0,\,C>0,\, \alpha\in(0,1)\qquad\textit{if}\,\,\,\,\,\frac{2n}{n+2}\leq p<\infty,$$
$$\|u(t, \cdot)\|_{L^{\frac{n(2-p)}{p}}(\Omega)}\leq\frac{C}{1+t^{\frac{\alpha}{p-1}}},\,t\geq 0,\,C>0,\, \alpha\in(0,1)\qquad\textit{if}\,\,\,\,\,1<p<\frac{2n}{n+2};$$

$\bullet$ Solution of fractional porous medium equation: $$\|u(t, \cdot)\|_{L^{m+1}(\Omega)}\leq\frac{C}{1+t^{\frac{\alpha}{m}}},\,t\geq 0,\,C>0,\,\alpha\in(0,1)\qquad\textit{if}\,\,\,\,\,\frac{n-2}{n+2}\leq m<\infty,$$
$$\|u(t, \cdot)\|_{L^{\frac{n(1-m)}{2}}(\Omega)}\leq\frac{C}{1+t^{\frac{\alpha}{m}}},\,t\geq 0,\,C>0,\,\alpha\in(0,1)\qquad\textit{if}\,\,\,\,\,1<m<\frac{n-2}{n+2}.$$
In \cite{Vald2}, Dipierro, Valdinoci and Vespri considered a more general sub-diffusion equation
\begin{equation*}\partial^{\alpha}_{0+,t} u(t, x) + \mathcal{N}[u](t, x) = 0,\end{equation*}
where $\mathcal{N}$ one of the following operators:

$\bullet$ the Laplacian,

$\bullet$ the p-Laplacian,

$\bullet$ the porous medium operator,

$\bullet$ the doubly nonlinear operator,

$\bullet$ the mean curvature operator,

$\bullet$ the fractional Laplacian,

$\bullet$ the fractional p-Laplacian,

$\bullet$ the sum of different space-fractional operators,

$\bullet$ the fractional porous medium operator,

$\bullet$ the fractional mean curvature operator.

The main structural assumption they made was that there are $s \in (1, +\infty),$ $\gamma\in (0, +\infty),$ and $C>0$ such that if $u$ is a solution to the above sub-diffusion equation, then
$$\|u(t, \cdot)\|^{s-1+\gamma}_{L^s(\Omega)}\leq C\int\limits_\Omega u^{s-1}(t, x)\mathcal{N}[u](t, x)dx.$$
Under the above structural condition they obtain that
$$\|u(t, \cdot)\|_{L^s(\Omega)}\leq \frac{M}{1+t^{\frac{\alpha}{\gamma}}},\,M>0,\,\alpha\in(0,1).$$
\subsection{Motivation}
Consider the following toy model of the influence of the time-dependent coefficient on the diffusion process (for example see \cite{Bol, Cos, FaLe, Hris, Prus})
\begin{equation}\label{00}
\partial^{\alpha}_{0+,t} u(t, x) - t^\beta u_{xx}(t, x) = 0.
\end{equation}
This equation can be used to model sub-diffusive and super-diffusive processes in turbulent media. The mean squared displacement corresponding to this equation is given by
$$\langle x^2\rangle\sim t^{\alpha+\beta}.$$
As observed, the mean squared displacement demonstrates power law behavior. The sub-diffusive processes arise from $0<\alpha+\beta<1,$ while the super-diffusive processes arise from $\alpha+\beta>1.$ The normal diffusive processes arise from $\alpha+\beta=1,$ which is absent in the case of the time-fractional diffusion equations with constant coefficients, except for $\alpha=1$. Moreover, the complete stationary process can be formed for any time when $\alpha=-\beta$.

The practical significance of time-fractional diffusion equations with time-dependent coefficients lies in their ability to encapsulate physical phenomena. They serve as mathematical representations for both sub-diffusive and super-diffusive processes occurring in turbulent media. For instance, the modeling of pollutant transport in porous media can be effectively handled through the lens of sub-diffusive processes. On the other hand, the kinetics of particle motion in turbulent fluids can be understood more comprehensively using super-diffusive processes as a model.

To our understanding, existing literature on decay estimates for time-fractional evolution equations primarily focuses on cases involving constant coefficients, time-independent coefficients, or time-dependent coefficients within the $L^\infty$ (bounded) range. This observation fuels our interest in exploring the decay estimates for solutions of time-fractional evolution equations that feature time-dependent weakly singular coefficients.

\section{Preliminaries}
\subsection{Kilbas-Saigo function and its properties}
The Kilbas-Saigo function was introduced by Kilbas and Saigo \cite[Remark 5.1]{KS95} in terms of a special function of the form
\begin{equation}
\label{SF-01}
E_{\alpha, m, n}(z)=1+\sum_{k=1}^{\infty}\prod_{j=0}^{k-1}\frac{\Gamma(\alpha(jm+n)+1)}{\Gamma(\alpha(jm+n+1)+1)}\,z^{k},
\end{equation}
where $\alpha,\, m$ are real numbers and $n\in \mathbb{C}$ such that
\begin{equation}\label{cond1}
\alpha>0,\, m>0,\, \alpha(jm+n)+1\neq -1,-2,-3,... (j\in\mathbb N_0).
\end{equation}
In particular, if $m = 1,$ the function $E_{\alpha, m, n}(z)$ is reduced to the two-parameter Mittag-Leffler function:
\begin{equation*}
 E_{\alpha, 1, n}(z)=\Gamma(\alpha n+1)E_{\alpha, \alpha n+1}(z),
\end{equation*} where two-parameter Mittag-Leffler function $E_{\alpha,\beta}(z)$ is defined by
\begin{equation*}
E_{\alpha,\beta}(z)=\sum\limits_{k=0}^\infty \frac{z^k}{\Gamma(\alpha k+\beta)},\,\,\alpha>0,\, \beta>0,\, z\in \mathbb{C}.
\end{equation*} Two-parameter Mittag-Leffler function, sometimes called a Mittag-Leffler-type function, first appeared in \cite{W05}.
If $m = 1, n=0,$ then it coincides with the classical Mittag-Leffler function:
\begin{equation*}
 E_{\alpha, 1, 0}(z)=E_{\alpha, 1}(z),
\end{equation*} where a classical Mittag-Leffler function $E_{\alpha,1}(z)$ is defined by (\cite{M-L03})
\begin{equation*}
E_{\alpha,1}(z)=\sum\limits_{k=0}^\infty \frac{z^k}{\Gamma(\alpha k+1)},\,\,\alpha>0,\, z\in \mathbb{C},
\end{equation*} with the property $E_{1,1}(z)=e^z.$

Below we calculate a special case of the Kilbas-Saigo function when $\alpha=1,\,m>1,\,n=m-1$
\begin{align*}
E_{1, m, m-1}(z)&=1+\sum_{k=1}^{\infty}\prod_{j=0}^{k-1}\frac{\Gamma((jm+m-1)+1)}{\Gamma((jm+m)+1)}\,z^{k}
\\&=1+\sum_{k=1}^{\infty}\prod_{j=0}^{k-1}\frac{\Gamma(jm+m)}{\Gamma(jm+m+1)}\,z^{k}\\& =1+\sum_{k=1}^{\infty}\prod_{j=0}^{k-1}\frac{1}{(jm+m)}\,z^{k} \\& =1+\sum_{k=1}^{\infty}\frac{1}{k!}\left(\frac{z}{m}\right)^{k}=e^{\frac{z}{m}},\end{align*} which gives $E_{1, m, m-1}(z)=e^{\frac{z}{m}}.$

In the course to proving our main results for the linear cases, one of the key roles is played the following properties of the Kilbas-Saigo function obtained in \cite[Theorem 2]{TSim19}
\begin{equation}\label{estim1}
\frac{1}{1+\Gamma(1-\alpha)z}\leq E_{\alpha, m, m-1}(-z)\leq \frac{1}{1+\frac{\Gamma(1+(m-1)\alpha)}{\Gamma(1+m\alpha)}z},\, z\geq 0,
\end{equation} where $m>1$ and $0<\alpha<1$.
\subsection{Semi-linear fractional differential equation}
In this subsection, we study the semi-linear equation
\begin{equation}\label{ode}\partial^{\alpha}_{0+,t}H(t) + \nu t^{\beta}H^\delta(t) = 0,\,t>0,\,\nu>0,\,\delta>0.\end{equation}
\begin{lemma}\label{lem1}
Let $\alpha\in(0,1),$ $\beta>-\alpha,$ $\delta>0$ and $\nu > 0.$ Suppose that $H\in C(\mathbb{R}_+)$ is the solution of \eqref{ode} with the initial data $H(0)=H_0>0.$ Then, it holds
\begin{equation}\label{ode1}\frac{c_1}{1+t^{\frac{\alpha+\beta}{\delta}}}\leq H(t)\leq \frac{c_2}{1+t^{\frac{\alpha+\beta}{\delta}}},\,c_1,c_2>0,\,t\geq 0.\end{equation}
\end{lemma}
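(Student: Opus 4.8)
The plan is to reduce the semi-linear equation \eqref{ode} to a comparison with an explicitly solvable model whose solution is governed by a Kilbas–Saigo function, and then to invoke the two-sided bound \eqref{estim1}. The natural candidate for the comparison function is a power of a Kilbas–Saigo function: recall that for the pure Kilbas–Saigo equation $\partial^{\alpha}_{0+,t} y(t) + \lambda y(t) = 0$ with $y(0)=1$ one has $y(t)=E_{\alpha,1,1-\alpha}(-\lambda t^{\alpha})$ (or the appropriate normalization), and more to the point, the function $t \mapsto E_{\alpha,m,m-1}(-c\,t^{\alpha})$ satisfies a fractional ODE with a polynomial weight. The first step is therefore to identify the parameters: setting $m$ so that the exponent in \eqref{estim1} produces the decay rate $t^{-(\alpha+\beta)/\delta}$, I expect to need $m = \delta + 1$ together with the time-rescaling induced by the weight $t^{\beta}$, so that the relevant comparison object is $H_{*}(t) = H_0\, E_{\alpha, \delta+1, \delta}\!\left(-c\, t^{\alpha+\beta}\right)$ for a suitable constant $c = c(\alpha,\beta,\delta,\nu,\kappa)$. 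One verifies by direct computation with the Caputo derivative of a composite power (using $\partial^{\alpha}_{0+,t} t^{\mu} = \frac{\Gamma(\mu+1)}{\Gamma(\mu+1-\alpha)} t^{\mu-\alpha}$ term by term on the series defining $E_{\alpha,\delta+1,\delta}$) that $H_{*}$ solves an equation of the form $\partial^{\alpha}_{0+,t} H_{*}(t) + \tilde\nu\, t^{\beta} H_{*}^{\delta}(t) = 0$; the condition $\beta > -\alpha$ is exactly what guarantees all the exponents appearing are admissible and the series converges.

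The second step is the comparison argument. Having an explicit sub-/super-solution, I would use a fractional comparison principle (the standard maximum principle for the Caputo derivative: if $\partial^{\alpha}_{0+,t} v \ge \partial^{\alpha}_{0+,t} w$, $v(0)\ge w(0)$, and the nonlinearity $z\mapsto \nu t^\beta z^\delta$ is monotone in the relevant range, then $v \ge w$). By choosing the constant $c$ in $H_{*}$ slightly too large one gets a super-solution dominating $H$, and slightly too small a sub-solution dominated by $H$; combined with $H_{*}(0)=H_0=H(0)$ this sandwiches $H(t)$ between two multiples of $E_{\alpha,\delta+1,\delta}(-c_{\pm} t^{\alpha+\beta})$. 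The third step is purely to feed these into \eqref{estim1}: since $\delta+1 > 1$ and $0<\alpha<1$, the bound \eqref{estim1} (applied with the argument $z = c_\pm t^{\alpha+\beta}$) gives
\[
\frac{A_1}{1 + B_1 t^{\alpha+\beta}} \le H_{*}(t) \le \frac{A_2}{1 + B_2 t^{\alpha+\beta}},
\]
and absorbing constants and using equivalence of $\frac{1}{1+Bt^{\alpha+\beta}}$ with $\frac{1}{(1+t^{(\alpha+\beta)/\delta})^{\delta}}$ up to constants — wait, more directly one raises to the power $1/\delta$ if the comparison is done at the level of $H^\delta$, or one simply notes $H(t) \asymp (1+t^{\alpha+\beta})^{-1/\delta} \asymp (1+t^{(\alpha+\beta)/\delta})^{-1}$ — yields \eqref{ode1}.

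The main obstacle I anticipate is the rigorous justification of the comparison principle in this fractional, weighted, and possibly non-Lipschitz setting: when $\delta < 1$ the map $z \mapsto z^\delta$ is not Lipschitz at $0$, so the uniqueness/comparison machinery must be handled with care (e.g. by working on $[0,T]$ with $H$ bounded below, or by an approximation argument truncating the nonlinearity away from zero and passing to the limit), and one must also confirm that the solution $H$ stays positive so that $H^\delta$ is well-defined and the monotonicity of the nonlinearity can be used. A secondary technical point is checking that $H_{*}$, built from the Kilbas–Saigo series, genuinely lies in $C(\mathbb{R}_+)$ and has a well-defined Caputo derivative term-by-term — this is where the hypothesis $\beta > -\alpha$ does real work, ensuring $\alpha + \beta > 0$ so the leading power $t^{\alpha+\beta}$ is continuous up to $t=0$ and the differentiated series still converges locally uniformly.
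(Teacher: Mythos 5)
Your reduction step is where the argument breaks down. The Kilbas--Saigo function does not solve the nonlinear equation: term-by-term Caputo differentiation of the series shows that $E_{\alpha,m,m-1}(-\lambda t^{\alpha m})$ satisfies the \emph{linear} weighted equation $\partial^{\alpha}_{0+,t}y(t)+\lambda t^{\alpha(m-1)}y(t)=0$, so the only way to match the weight $t^{\beta}$ is $m=1+\beta/\alpha$ (this is exactly how it is used for the linear problem in Theorem \ref{th1}); the parameter $m$ has nothing to do with $\delta$, and no choice of $m$ or $c$ makes $H_{*}(t)=H_0E_{\alpha,\delta+1,\delta}(-c\,t^{\alpha+\beta})$ satisfy $\partial^{\alpha}_{0+,t}H_{*}+\tilde\nu t^{\beta}H_{*}^{\delta}=0$ for $\delta\neq 1$. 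This is not a technicality you can absorb into constants: by \eqref{estim1} your $H_{*}$ is comparable to $(1+c\,t^{\alpha+\beta})^{-1}$, i.e.\ it decays like $t^{-(\alpha+\beta)}$, whereas the lemma asserts the rate $t^{-(\alpha+\beta)/\delta}$; these are genuinely different rates for $\delta\neq 1$, and the final manipulation in your third step (passing from $(1+t^{\alpha+\beta})^{-1}$ to $(1+t^{(\alpha+\beta)/\delta})^{-1}$, or ``raising to the power $1/\delta$'') is unjustified precisely because the Caputo derivative has no chain rule, so $H^{\delta}$ does not solve a linear weighted equation and the comparison cannot be ``done at the level of $H^{\delta}$''. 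Your secondary concerns (comparison principle for the non-Lipschitz nonlinearity when $\delta<1$, positivity of $H$) are legitimate, but they are downstream of a comparison function that does not exist.

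The paper's proof avoids Kilbas--Saigo functions entirely for this lemma: it constructs explicit piecewise barriers of power type, namely a sub-solution $\hat H$ which equals $H_0-\nu\Gamma(1-\alpha)H_0^{\delta}t^{\alpha+\beta}$ up to a time $t_1$ and a constant multiple of $t^{-\frac{\alpha+\beta}{\delta}}$ afterwards, and a super-solution $\tilde H$ which is constant up to a time $t_2$ and then decays like $H_0t_2^{\frac{\alpha+\beta}{\delta}}t^{-\frac{\alpha+\beta}{\delta}}$, with $t_1,t_2$ chosen explicitly in terms of $H_0,\nu,\alpha,\beta,\delta$; the sub-/super-solution property is checked by direct computation with the Caputo derivative (following the scheme of Vergara--Zacher, \cite[Section 7]{Zacher1}), and the two-sided bound \eqref{ode1} then follows from the comparison principle. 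If you want to repair your write-up, the correct move is to verify these power-type barriers directly rather than to look for an exactly solvable special-function model: the exponent $\frac{\alpha+\beta}{\delta}$ arises from balancing the memory term, which behaves like $t^{-\alpha}$ times a constant coming from the initial value, against $t^{\beta}H^{\delta}\sim t^{\beta-\gamma\delta}$, not from the asymptotics \eqref{estim1} of a Kilbas--Saigo profile.
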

\begin{proof}
We will prove the decay estimate of the solution to equation \eqref{ode} using the upper and lower solutions method. Let us consider the functions
$$\hat{H}(t)=\left\{\begin{array}{l}H_0-\nu\Gamma(1-\alpha)H_0^{\delta}t^{\alpha+\beta},\,t\in\left[0, t_1\right],\\{}\\ \left(\frac{H_0^{1-\delta}}{2\nu\Gamma(1-\alpha)}\right)^{\frac{1}{\delta}}2^{-1}H_0t^{-\frac{\alpha+\beta}{\delta}},\,t\geq t_1\end{array}\right.$$ and
$$\tilde{H}(t)=\left\{\begin{array}{l}H_0, \,t\in\left[0, t_2\right],\\{}\\ H_0t_2^{\frac{\alpha+\beta}{\delta}}t^{-\frac{\alpha+\beta}{\delta}},\,t\geq t_2,\end{array}\right.$$ where $$t_1=\left(\frac{H_0^{1-\delta}}{2\nu\Gamma(1-\alpha)}\right)^{\frac{1}{\alpha+\beta}}$$ and $$t_2=\left(\frac{H_0^{1-\delta}}{\nu}\bigg(\frac{2^{\alpha}}{\Gamma(1-\alpha)}+\frac{\alpha+\beta}{\delta} \frac{2^{\alpha+\frac{\alpha+\beta}{\delta}}}{\Gamma(2-\alpha)}\bigg)\right)^{\frac{1}{\alpha+\beta}}.$$
Having carried out calculations similar to \cite[Section 7]{Zacher1}, it is easy to prove the function $\tilde{H}(t)$ will be a super-solution, and $\hat{H}(t)$ will be a sub-solution of equation \eqref{ode} with the initial data $H(0)=H_0$.
It means, there exist positive constants $c_1>0$ and $c_2>0$ such that
\begin{equation*}\frac{c_1}{1+t^{\frac{\alpha+\beta}{\delta}}}\leq H(t)\leq \frac{c_2}{1+t^{\frac{\alpha+\beta}{\delta}}},\,t\geq 0.\end{equation*}
The proof is complete.
\end{proof}

\section{Decay rates for the linear evolution equations}
\subsection{Linear anomalous diffusion equations}
Let $a(t)=t^\beta,\,\beta>-\alpha$ and $\mathcal{A}(u):=\Delta u,$ then the equation \eqref{1.1} coincide with the sub-diffusion equation
\begin{equation}\label{1.1-1}\partial^{\alpha}_{0+,t} u(t, x) - t^{\beta}\Delta u(t, x) = 0,\,\left({t,x}
\right) \in \mathbb{R}_+\times\Omega:=\Omega_+.\end{equation}

At this thanks to the elements of the theories of harmonic analysis, in this stage, we are able to study in detail the properties of the solution of problem \eqref{1.1-1}, \eqref{1.2}, \eqref{1.3}.

Let $H^2(\Omega)$ is a Hilbert space defined by
$$H^2(\Omega)=\{f\in L^2(\Omega):\, \sum\limits_{k = 1}^\infty \lambda^2_k|(f,e_k)|^2<\infty\},$$
with the norm
$$\|f\|^2_{H^2(\Omega)}=\sum\limits_{k = 1}^\infty \lambda^2_k|(f,e_k)|^2,$$
where $\{\lambda_k> 0,\;k\in\mathbb N\}$ are eigenvalues and $\{e_k\in L^2(\Omega), k\in\mathbb N\}$ are corresponding orthonormal eigenfunctions of Dirichlet-Laplacian problem:
\begin{equation}\label{DirLap}\left\{\begin{array}{l}-\Delta e_k(x)=\lambda_k e_k(x),\,x\in\Omega,\\{}\\ e_k(x)=0,\,\partial\Omega.\end{array}\right.
\end{equation}

\begin{definition} The generalized solution of problem \eqref{1.1-1}, \eqref{1.2}, \eqref{1.3} is a bounded function $u\in C\left([0,+\infty);L^2(\Omega)\right),$ such that $t^{-\beta}\partial_{0+,t}^{\alpha } u, \Delta u\in C\left((0,+\infty);L^2(\Omega)\right).$\end{definition}
\begin{theorem}\label{th1}Let $\alpha\in(0,1),\,\beta>-\alpha$ and $u_0 \in H^2(\Omega).$ Then the generalized solution $u$ of problem \eqref{1.1-1}, \eqref{1.2}, \eqref{1.3} exists, it is unique and can be represented as \begin{equation}\label{1.5}u\left( {t, x} \right) = \sum\limits_{k = 1}^\infty {u_{0k} E_{\alpha, 1+\frac{\beta}{\alpha}, \frac{\beta}{\alpha}} \left( {- \lambda_k t^{\alpha+\beta}  }\right) e_k\left( x \right)},\,(t, x)\in \Omega_+,\end{equation} where $u_{0k}=\int\limits_\Omega {u_0\left( x \right) {e_k \left( x \right)}dx},\,k\in\mathbb{N},$ and $E_{\alpha, m, l} \left( z \right)$ is the Kilbas-Saigo function.

In addition, the solution $u$ satisfies the following decay rates:
\begin{align}\label{DEst}\frac{m_1}{1+\lambda_1t^{\alpha+\beta}}\leq\|u(t, \cdot)\|_{L^2(\Omega)}\leq\frac{M_1}{1+\lambda_1t^{\alpha+\beta}},\,t\geq 0,\end{align} where $m_1$ and $M_1$ are positive constants.
\end{theorem}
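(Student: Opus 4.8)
The plan is to diagonalize the spatial operator. Since $\{e_k\}_{k\in\mathbb N}$ is an orthonormal basis of $L^2(\Omega)$ made of the Dirichlet eigenfunctions of $-\Delta$ from \eqref{DirLap}, I look for $u$ in the form $u(t,x)=\sum_{k=1}^{\infty}u_k(t)e_k(x)$ with $u_k(t)=(u(t,\cdot),e_k)$. Projecting \eqref{1.1-1} onto $e_k$ and using self-adjointness of the Dirichlet Laplacian together with $-\Delta e_k=\lambda_k e_k$ reduces the problem to the family of scalar Cauchy problems
\begin{equation}\label{pf-scalar}
\partial^{\alpha}_{0+,t}u_k(t)+\lambda_k t^{\beta}u_k(t)=0,\qquad u_k(0)=u_{0k},\qquad k\in\mathbb N .
\end{equation}
So the argument splits into: (i) solving \eqref{pf-scalar} explicitly by a Kilbas--Saigo function; (ii) uniqueness for \eqref{pf-scalar}; (iii) summing the series and verifying that it is a generalized solution in the sense of the definition above; (iv) deducing \eqref{DEst} via Parseval's identity and \eqref{estim1}.

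For (i), put $m=1+\tfrac{\beta}{\alpha}$ and $n=\tfrac{\beta}{\alpha}$, so that $\alpha m=\alpha+\beta$, $\alpha n=\beta$ and $m-1=n$; I claim $y_\lambda(t):=E_{\alpha,m,n}(-\lambda t^{\alpha+\beta})$ solves $\partial^{\alpha}_{0+,t}y_\lambda+\lambda t^{\beta}y_\lambda=0$, $y_\lambda(0)=1$. Writing $y_\lambda(t)=\sum_{k\ge 0}c_k(-\lambda)^k t^{(\alpha+\beta)k}$ with $c_0=1$ and $c_k=\prod_{j=0}^{k-1}\frac{\Gamma(\alpha(jm+n)+1)}{\Gamma(\alpha(jm+n+1)+1)}$, the choice of $m,n$ gives $\alpha((k-1)m+n)=(\alpha+\beta)k-\alpha$ and $\alpha((k-1)m+n+1)=(\alpha+\beta)k$, hence $\frac{c_k}{c_{k-1}}=\frac{\Gamma((\alpha+\beta)k-\alpha+1)}{\Gamma((\alpha+\beta)k+1)}$. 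Using $\partial^{\alpha}_{0+,t}t^{\mu}=\frac{\Gamma(\mu+1)}{\Gamma(\mu+1-\alpha)}t^{\mu-\alpha}$ for $\mu>0$ and $\partial^{\alpha}_{0+,t}1=0$, a termwise computation yields
\begin{equation*}
\partial^{\alpha}_{0+,t}y_\lambda(t)+\lambda t^{\beta}y_\lambda(t)=\sum_{k\ge1}\Bigl(c_k\,\tfrac{\Gamma((\alpha+\beta)k+1)}{\Gamma((\alpha+\beta)k+1-\alpha)}-c_{k-1}\Bigr)(-\lambda)^k t^{(\alpha+\beta)k-\alpha}=0,
\end{equation*}
by the recursion just found; these term-by-term operations are legitimate because $c_k/c_{k-1}\to0$, so $y_\lambda$ is entire in its argument and, with its $t$-derivative, converges uniformly on compact subsets of $(0,\infty)$ (the exponents being positive since $\alpha+\beta>0$). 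Therefore $u_k(t)=u_{0k}E_{\alpha,1+\beta/\alpha,\beta/\alpha}(-\lambda_k t^{\alpha+\beta})$, which is \eqref{1.5}. For (ii), rewriting \eqref{pf-scalar} as the weakly singular Volterra equation $u_k(t)=u_{0k}-\frac{\lambda_k}{\Gamma(\alpha)}\int_0^t(t-s)^{\alpha-1}s^{\beta}u_k(s)\,ds$ and applying a generalized (weakly singular) Gronwall inequality to the difference of two solutions gives scalar uniqueness; for the PDE, if $u,v$ are two generalized solutions then $w=u-v$ has coefficients $w_k$ solving \eqref{pf-scalar} with $w_k(0)=0$, whence $w_k\equiv0$ for all $k$ and $u=v$.

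For (iii)--(iv) I invoke the two-sided bound \eqref{estim1} (available here since $m-1=n$), $\frac{1}{1+\Gamma(1-\alpha)z}\le E_{\alpha,m,m-1}(-z)\le\frac{1}{1+C_\alpha z}$ for $z\ge0$, where $C_\alpha=\frac{\Gamma(1+(m-1)\alpha)}{\Gamma(1+m\alpha)}$; in particular $0\le E_{\alpha,m,m-1}(-z)\le1$. Then $\sum_k u_{0k}^2 E_{\alpha,m,m-1}(-\lambda_k t^{\alpha+\beta})^2\le\|u_0\|_{L^2(\Omega)}^2$ shows that \eqref{1.5} converges in $C([0,\infty);L^2(\Omega))$ (continuity, including at $t=0$ since $E_{\alpha,m,m-1}(0)=1$, following from dominated convergence for series) and that $u$ is bounded. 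Differentiating \eqref{1.5} termwise and using the scalar equation gives $\partial^{\alpha}_{0+,t}u=t^{\beta}\Delta u$, with $\Delta u=-\sum_k\lambda_k u_{0k}E_{\alpha,m,m-1}(-\lambda_k t^{\alpha+\beta})e_k$ satisfying $\|\Delta u(t,\cdot)\|^2_{L^2(\Omega)}\le\sum_k\lambda_k^2 u_{0k}^2=\|u_0\|^2_{H^2(\Omega)}<\infty$, so $\Delta u\in C((0,\infty);L^2(\Omega))$; the uniform-in-$k$ estimate $0\le\lambda_k t^{\beta}E_{\alpha,m,m-1}(-\lambda_k t^{\alpha+\beta})\le C_\alpha^{-1}t^{-\alpha}$ controls the singular factor and shows $t^{-\beta}\partial^{\alpha}_{0+,t}u=\Delta u\in C((0,\infty);L^2(\Omega))$, so $u$ is indeed a generalized solution. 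Finally, by Parseval $\|u(t,\cdot)\|^2_{L^2(\Omega)}=\sum_k u_{0k}^2 E_{\alpha,m,m-1}(-\lambda_k t^{\alpha+\beta})^2$; the upper bound in \eqref{estim1} together with $\lambda_k\ge\lambda_1$ gives $\|u(t,\cdot)\|_{L^2(\Omega)}\le\frac{\|u_0\|_{L^2(\Omega)}}{1+C_\alpha\lambda_1 t^{\alpha+\beta}}\le\frac{M_1}{1+\lambda_1 t^{\alpha+\beta}}$ with $M_1=\|u_0\|_{L^2(\Omega)}\max\{1,C_\alpha^{-1}\}$, while keeping only the first index $k_0$ with $u_{0k_0}\neq0$ and using the lower bound in \eqref{estim1} (plus the elementary comparison $\frac{1}{1+\Gamma(1-\alpha)\lambda_{k_0}s}\ge\frac{\lambda_1/\lambda_{k_0}}{\max\{1,\Gamma(1-\alpha)\}}\cdot\frac{1}{1+\lambda_1 s}$) yields the matching lower bound $\|u(t,\cdot)\|_{L^2(\Omega)}\ge\frac{m_1}{1+\lambda_1 t^{\alpha+\beta}}$ for $u_0\not\equiv0$, which also shows the rate $t^{-(\alpha+\beta)}$ is optimal.

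The step I expect to be the real obstacle is (iii): making the termwise Caputo differentiation and the memberships $t^{-\beta}\partial^{\alpha}_{0+,t}u,\Delta u\in C((0,\infty);L^2(\Omega))$ fully rigorous when $\beta<0$, where $t^{\beta}$ is weakly singular and the solution is only Hölder (not $C^1$) near $t=0$ — one must exploit the integrability $\alpha+\beta>0$ and uniform control of the series in $k$. A secondary point is that \eqref{estim1} as stated requires $m>1$, i.e. $\beta>0$; for $-\alpha<\beta\le0$ (so $0<m\le1$) one should either extend that inequality directly or invoke Lemma \ref{lem1} with $\delta=1$, tracking the dependence of its constants on $\nu=\lambda_k$ and $H_0=u_{0k}$ to recover uniformity in $k$. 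The remaining ingredients — the coefficient recursion, Parseval's identity, and the Volterra/Gronwall uniqueness — are routine.
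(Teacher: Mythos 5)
Your proposal follows essentially the same route as the paper's proof: eigenfunction expansion reducing \eqref{1.1-1} to the scalar problems $\partial^{\alpha}_{0+,t}u_k+\lambda_k t^{\beta}u_k=0$, the Kilbas--Saigo representation \eqref{3.6}, and the two-sided bound \eqref{estim1} combined with Parseval's identity for both the upper and lower decay rates (the paper simply cites \cite[P.~233]{1} for the scalar solution where you verify the series termwise, and it keeps the first Fourier mode where you keep the first nonvanishing one). The $m>1$ restriction in \eqref{estim1} that you flag for $-\alpha<\beta\le 0$ is a genuine subtlety, but it is shared by the paper itself, which applies \eqref{estim1} for all $\beta>-\alpha$ without comment, so it does not distinguish your argument from theirs.
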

\begin{remark} The results of Theorem \ref{th1} remain valid for more general positive self-adjoint operators with discrete spectrum, such as Robin-Laplacian, fractional Laplacian, poly-Laplacian, Sturm-Liouville operator with involution, fractional Sturm-Liouville operator and others. In these cases, the proofs are carried out by simply repeating the proof of Theorem \ref{th1}.
But in the case of the \textbf{Neumann} boundary conditions, it will not be possible to obtain the decay rates of solutions. The solution of this case can be written in the form
\begin{equation}\label{Neumann}u\left( {t, x} \right) = \sum\limits_{k = 0}^\infty {u_{0k} E_{\alpha, 1+\frac{\beta}{\alpha}, \frac{\beta}{\alpha}} \left( {- \lambda_k t^{\alpha+\beta}  }\right) e_k\left( x \right)},\,(t, x)\in \Omega_+\end{equation}
and it does not decrease for large $t\rightarrow\infty$, but remains bounded for all $t\geq 0$. Using the technique of the proof of Theorem \ref{th1} (given below), it is not difficult to show that
\begin{align*}\sqrt{|u_{00}|}\leq\|u(t, \cdot)\|_{L^2(\Omega)}\leq C_1\|u_0\|_{L^2(\Omega)},\,\,t\geq 0,\end{align*}
where $u_{00}=\int\limits_\Omega u_0(x)dx$ and $0<C_1$ is the arbitrary constant.

This is due to the fact that the first eigenvalue of the Neumann-Laplace operator is equal to zero. In the general case, one can obtain the following estimate
\begin{equation}
\label{DEstN}\begin{split}\sqrt{|u_{00}|}+\frac{c\sqrt{|u_{01}|}}{1+\lambda^N_2t^{\alpha+\beta}}&\leq\|u(t, \cdot)\|_{L^2(\Omega)}\\&\leq \sqrt{|u_{00}|}+\frac{C\|u_0\|_{{L^2(\Omega)}}}{1+\lambda^N_2t^{\alpha+\beta}},\,t\geq 0,\end{split}\end{equation} where $C, c>0$ are arbitrary constants, $u_{01}=\int\limits_\Omega u_0(x)e_2(x)dx,$ $e_2(x)$ is the second eigenfunction and $\lambda_2^N>0$ is the corresponding second eigenvalue of Neumann-Laplacian problem:
\begin{equation*}\left\{\begin{array}{l}-\Delta e_k(x)=\lambda_k e_k(x),\,x\in\Omega,\\{}\\ \partial_\nu e_k(x)=0,\,\partial\Omega.\end{array}\right.
\end{equation*} Here $\partial_\nu$ is the outside normal derivative.

In \eqref{DEstN}, to obtain the decay estimate, it is necessary and sufficient that the condition $\int\limits_\Omega {u_0\left( x \right)dx}=0$ be satisfied for the initial data $u_0.$ Then the solution of the Cauchy-Neumann problem satisfies the estimate
\begin{equation*}
\label{DEst+}\begin{split}\frac{c_1}{1+\lambda^N_2t^{\alpha+\beta}}\leq\|u(t, \cdot)\|_{L^2(\Omega)}\leq \frac{C_1}{1+\lambda^N_2t^{\alpha+\beta}},\,t\geq 0,\end{split}\end{equation*}
that is, the decay rate will be sharp up to the constant.

Similar reasoning is also valid for the fractional Laplace with the nonlocal Neumann condition
\begin{equation*}\left\{\begin{array}{l}(-\Delta)^s e_k(x)=\lambda_k e_k(x),\,x\in\Omega,\\{}\\ \mathcal{N}_s e_k(x)=0,\,\mathbb{R}^n\setminus\bar\Omega,\end{array}\right.
\end{equation*} where $\mathcal{N}_s$ is a nonlocal normal derivative, given by
$$\mathcal{N}_s e_k=c_{n,s}\int\limits_\Omega\frac{e_k(x)-e_k(y)}{|x-y|^{n+2s}}dy,$$
where $c_{n,s}$ is a normalization constant.

We have to note that in \cite{Vald4} it was proved that the above nonlocal Neumann-Laplace eigenvalue problem has eigenfunctions forming an orthogonal basis in $L^2(\Omega)$, and the corresponding eigenvalues are nonnegative:
$$0=\lambda_1<\lambda_2\leq\lambda_3\leq ....$$
\end{remark}
\begin{proof}[Proof of Theorem \ref{th1}] We will carry out the proof step by step.\\
{\bf Existence of solution.}
The solution of problem \eqref{1.1-1}, \eqref{1.2}-\eqref{1.3} can be represented as: \begin{equation}\label{3.2}u\left( {t,x} \right) = \sum\limits_{k = 1}^\infty {u_k
\left( t \right) e_k\left( x \right)},\,\,\,\text{in}\,\, \Omega_+.\end{equation} It is clear that if $u_0\in H^2(\Omega)$, then
it follows
\begin{equation*}u_0\left( x \right) = \sum\limits_{k = 1}^\infty {u_{0k}
e_k \left( x \right)},\,x\in\Omega,\end{equation*} where
$u_{0k}=\int\limits_\Omega {u_0\left( x \right) {e_k \left( x \right)}dx}.$

Substituting function \eqref{3.2} into equation \eqref{1.1}, we obtain the following problem for
$u_k(t),$
\begin{equation}\label{3.3}\partial_{0+,t}^{\alpha} u_k
\left( t \right) + \lambda_k t^{\beta}u_k \left( t \right) = 0,\, t >0,\end{equation} \begin{equation}\label{3.4}u_k \left(0\right) = u_{0k}.\end{equation}
Its known \cite[P. 233]{1} that the unique solution to problem \eqref{3.3}-\eqref{3.4} has the form \begin{equation}\label{3.6}u_k \left(
t \right) = u_{0k} E_{\alpha, 1+\frac{\beta}{\alpha}, \frac{\beta}{\alpha}} \left( {- \lambda_k t^{\alpha+\beta}  }\right),\end{equation}
hence
\begin{equation*}u\left( {t,x} \right) = \sum\limits_{k = 1}^\infty {u_{0k} E_{\alpha, 1+\frac{\beta}{\alpha}, \frac{\beta}{\alpha}} \left( {- \lambda_k t^{\alpha+\beta}  }\right) e_k\left( x \right)}.\end{equation*}
{\bf Convergence of solution.}
The estimate \eqref{estim1} gives
\begin{align*}|u_k \left(
t \right)| \leq \frac{|u_{0k}|}{1+\frac{\Gamma(\beta+1)}{\Gamma(\alpha+\beta+1)}\sqrt{\lambda_k} t^{\alpha+\beta}}\leq |u_{0k}|,\end{align*}
which implies
\begin{align*}\|u\left( {t,\cdot} \right)\|_{L^2(\Omega)}^2 &\leq \sum\limits_{k = 1}^\infty {|u_{0k}|^2 \left|E_{\alpha, 1+\frac{\beta}{\alpha}, \frac{\beta}{\alpha}} \left( {- \lambda_k t^{\alpha+\beta}  }\right)\right|^2 \|e_k\|_{L^2(\Omega)}^2}\\& \leq \sum\limits_{k = 1}^\infty \frac{|u_{0k}|^2}{\left(1+\frac{\Gamma(\beta+1)}{\Gamma(\alpha+\beta+1)}\lambda_k t^{\alpha+\beta}\right)^2} \\&\leq \sum\limits_{k = 1}^\infty {|u_{0k}|^2}=\|u_{0}\|_{L^2(\Omega)}^2<\infty,\end{align*} thanks to Parseval's identity.

Let us calculate $\partial_{0+,t}^{\alpha } u$ and $\Delta u.$ We have
\begin{align*}\partial^\alpha_{0+,t}u\left( {t,x} \right)& = \sum\limits_{k = 1}^\infty {u_{0k} \left(\partial^\alpha_{0+,t}E_{\alpha, 1+\frac{\beta}{\alpha}, \frac{\beta}{\alpha}} \left( {- \lambda_k t^{\alpha+\beta}  }\right)\right) e_k\left( x \right)}\\&= -t^{\beta}\sum\limits_{k = 1}^\infty \lambda_k{u_{0k} E_{\alpha, 1+\frac{\beta}{\alpha}, \frac{\beta}{\alpha}} \left( {- \lambda_k t^{\alpha+\beta}  }\right) e_k\left( x \right)},\,(t,x)\in \Omega_+\end{align*} and \begin{align*}\Delta u\left( {t,x} \right)& = \sum\limits_{k = 1}^\infty {u_{0k} E_{\alpha, 1+\frac{\beta}{\alpha}, \frac{\beta}{\alpha}} \left( {- \lambda_k t^{\alpha+\beta}  }\right) \Delta e_k\left( x \right)}\\&= -\sum\limits_{k = 1}^\infty \lambda_k{u_{0k} E_{\alpha, 1+\frac{\beta}{\alpha}, \frac{\beta}{\alpha}} \left( {- \lambda_k t^{\alpha+\beta}  }\right) e_k\left( x \right)},\,(t,x)\in \Omega_+.\end{align*}
Applying the above calculations and Parseval's identity we deduce that
\begin{align*}\left\|t^{-\beta}\partial_{0+,t}^{\alpha}u\left( {t,\cdot} \right)\right\|^2_{L^2(\Omega)} \leq \sum\limits_{k = 1}^\infty \lambda_k^2{|u_{0k}|^2 }=\|u_{0}\|^2_{H^2(\Omega)}<\infty \end{align*} and \begin{align*}\|\Delta u\left({t,\cdot} \right)\|^2_{L^2(\Omega)}\leq\sum\limits_{k = 1}^\infty \lambda_k^2{|u_{0k}|^2}=\|u_{0}\|^2_{H^2(\Omega)}<\infty.\end{align*}
{\bf Uniqueness of solution.} Suppose that there are two solutions $u_1(t,x)$ and $u_2(t,x)$ of problem \eqref{1.1-1}, \eqref{1.2}--\eqref{1.3}. Let $u(t,x)=u_1(t,x)-u_2(t,x).$ Then $u(t,x)$ satisfies the equation \eqref{1.1-1} and the homogeneous conditions \eqref{1.2}--\eqref{1.3}.

Let us consider the function \begin{equation}\label{3.5}u_k(t)=\int\limits_\Omega u(t,x){e_k(t)}dx,\,k\in\mathbb{N},\,t\geq 0.\end{equation}
Applying $\partial^{\alpha}_{0+,t}$ to the function \eqref{3.5} by \eqref{1.1} we have
\begin{align*}\partial^{\alpha}_{0+,t}u_k(t)&=\int\limits_\Omega \partial^{\alpha}_{0+,t}u(t,x){e_k(x)}dx =t^{\beta}\int\limits_\Omega \Delta u(t,x){e_k(x)}dx \\& =t^{\beta}\int\limits_\Omega u(t,x)\Delta{e_k(x)}dx=t^{\beta}\lambda_k\int\limits_\Omega u(t,x){e_k(x)}dx\\&= t^{\beta}\lambda_k u_k(t),\,k\in\mathbb{N},\,t\geq 0.\end{align*} Also from \eqref{1.2} and \eqref{1.3} we have
$u_k(0)=0.$ Then from \eqref{3.6} we conclude that $$u_k(t)=0,\,t\geq 0.$$ This implies $$\int\limits_\Omega u(t,x){e_k(x)}dx=0,$$ and the completeness of the system $e_k(x),\,k\in \mathbb{N},$ gives $u(t,x)\equiv 0,\,(t,x)\in \Omega_+.$\\
{\bf Optimal decay of solution.}
First we will prove the upper bound of $u$. It is known that the eigenvalues $\lambda_k$ of Dirichlet-Laplacian do not increase, that is $0<\lambda_1\leq\lambda_2\leq...\leq \lambda_k\leq...\nearrow+\infty.$ Then for the solution $u$ we have the following estimate
\begin{align*}\|u\left( {t,\cdot} \right)\|_{L^2(\Omega)}^2 &\leq \sum\limits_{k = 1}^\infty {|u_{0k}|^2 \left|E_{\alpha, 1+\frac{\beta}{\alpha}, \frac{\beta}{\alpha}} \left( {- \lambda_k t^{\alpha+\beta}  }\right)\right|^2 \|e_k\|_{L^2(\Omega)}^2}\\& \leq \sum\limits_{k = 1}^\infty \frac{|u_{0k}|^2}{\left(1+\frac{\Gamma(\beta+1)}{\Gamma(\alpha+\beta+1)}\lambda_k t^{\alpha+\beta}\right)^2} \\& \leq \frac{1}{\left(1+\frac{\Gamma(\beta+1)}{\Gamma(\alpha+\beta+1)}\lambda_1 t^{\alpha+\beta}\right)^2}\sum\limits_{k = 1}^\infty |u_{0k}|^2 \\&\leq \frac{M_1^2}{\left(1+ \lambda_1 t^{\alpha+\beta}\right)^2},\end{align*} hence
\begin{align*}\|u\left( {t,\cdot} \right)\|_{L^2(\Omega)} \leq \frac{M_1}{1+ \lambda_1 t^{\alpha+\beta}}.\end{align*}
Now we will prove the lower bound of $u$. Using Parseval's identity, we obtain
\begin{align*}\|u\left( {t,\cdot} \right)\|_{L^2(\Omega)}^2 &=\sum\limits_{k = 1}^\infty {|u_{0k}|^2 \left|E_{\alpha, 1+\frac{\beta}{\alpha}, \frac{\beta}{\alpha}} \left( {- \lambda_k t^{\alpha+\beta}  }\right)\right|^2 \|e_k\|_{L^2(\Omega)}^2}\\& \geq \sum\limits_{k = 1}^\infty \frac{|u_{0k}|^2}{\left(1+\Gamma(1-\alpha)\lambda_k t^{\alpha+\beta}\right)^2} \\&= \frac{|u_{01}|^2}{\left(1+\Gamma(1-\alpha)\lambda_1 t^{\alpha+\beta}\right)^2}+\sum\limits_{k = 2}^\infty \frac{|u_{0k}|^2}{\left(1+\Gamma(1-\alpha)\lambda_k t^{\alpha+\beta}\right)^2}\\&\geq \frac{m_1^2}{\left(1+ \lambda_1t^{\alpha+\beta}\right)^2}.\end{align*} The proof is complete.
\end{proof}
\subsection{Sub-diffusion equation with general diffusive coefficient}
In this subsection we consider a general case of sub-diffusion equation \eqref{1.1-1}, i.e.
\begin{equation}\label{1.1-1*}\partial^{\alpha}_{0+,t}u(t,x) - a(t)\Delta u(t,x) = 0,\,\left({t,x}
\right) \in \mathbb{R}_+\times\Omega:=\Omega_+,\end{equation} where $a(t)$ is a nonnegative function.

We recall that the existence and uniqueness of a weak local solution $u$ to problem \eqref{1.1-1*}, \eqref{1.2}-\eqref{1.3} were obtained in \cite{Zacher3}. We will be interested in the behavior of the solution for large $t.$
\begin{theorem}\label{th1+} Let $u$ is a solution of problem \eqref{1.1-1*}, \eqref{1.2}, \eqref{1.3}. Suppose that  $u_0 \in L^2(\Omega)$ and $a(t)$ satisfies (H), then the solution $u$ of problem \eqref{1.1-1*}, \eqref{1.2}, \eqref{1.3} satisfies the estimate:
\begin{align}\label{DEst2}\|u(\cdot,t)\|_{L^2(\Omega)}\leq\frac{M}{1+\lambda_1\kappa t^{\alpha+\beta}},\,t\geq 0,\end{align} where $M$ is a positive constant.
\end{theorem}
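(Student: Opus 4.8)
The plan is to reduce \eqref{1.1-1*} to the explicitly solvable model $a(t)=\kappa t^{\beta}$ of Theorem~\ref{th1} by a comparison argument, and then to repeat the spectral/Parseval estimate carried out there. Let $\{\lambda_k,e_k\}$ be the Dirichlet eigenpairs of \eqref{DirLap}, $0<\lambda_1\le\lambda_2\le\cdots$, put $u_{0k}=(u_0,e_k)_{L^2(\Omega)}$, and for the weak solution $u$ of \cite{Zacher3} set $u_k(t)=(u(t,\cdot),e_k)_{L^2(\Omega)}$. Testing the weak formulation of \eqref{1.1-1*} against $e_k$ and using $-\Delta e_k=\lambda_k e_k$ together with \eqref{1.3}, each coefficient solves the scalar Cauchy problem
\[
\partial^{\alpha}_{0+,t}u_k(t)+\lambda_k a(t)\,u_k(t)=0,\quad t>0,\qquad u_k(0)=u_{0k}.
\]

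Next I would compare $u_k$ with the solution $v_k$ of $\partial^{\alpha}_{0+,t}v_k+\lambda_k\kappa t^{\beta}v_k=0$, $v_k(0)=|u_{0k}|$. By Theorem~\ref{th1} and \cite[P.~233]{1}, $v_k(t)=|u_{0k}|\,E_{\alpha,1+\frac{\beta}{\alpha},\frac{\beta}{\alpha}}(-\lambda_k\kappa t^{\alpha+\beta})$, which is strictly positive by \eqref{estim1}. Setting $w_k^{\pm}(t)=v_k(t)\mp u_k(t)$ and using hypothesis (H),
\[
\partial^{\alpha}_{0+,t}w_k^{\pm}(t)+\lambda_k a(t)\,w_k^{\pm}(t)=\lambda_k\bigl(a(t)-\kappa t^{\beta}\bigr)v_k(t)\ge0,\qquad w_k^{\pm}(0)=|u_{0k}|\mp u_{0k}\ge0.
\]
Since the zero-order coefficient $\lambda_k a(t)$ is nonnegative, a comparison (maximum) principle for Caputo equations forces $w_k^{\pm}\ge0$, hence $|u_k(t)|\le v_k(t)$ for all $t\ge0$ and all $k\in\mathbb{N}$.

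To finish, I would apply \eqref{estim1} with $m=1+\frac{\beta}{\alpha}$ (so that $(m-1)\alpha=\beta$ and $m\alpha=\alpha+\beta$) together with $\lambda_k\ge\lambda_1$ to get
\[
v_k(t)\le\frac{|u_{0k}|}{1+C_0\lambda_k\kappa t^{\alpha+\beta}}\le\frac{|u_{0k}|}{1+C_0\lambda_1\kappa t^{\alpha+\beta}},\qquad C_0:=\frac{\Gamma(1+\beta)}{\Gamma(1+\alpha+\beta)}>0,
\]
so that, by Parseval's identity, $\|u(t,\cdot)\|_{L^2(\Omega)}^2=\sum_{k\ge1}|u_k(t)|^2\le(1+C_0\lambda_1\kappa t^{\alpha+\beta})^{-2}\|u_0\|_{L^2(\Omega)}^2$. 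Taking square roots and using $\frac{1}{1+C_0x}\le\frac{\max(1,C_0^{-1})}{1+x}$ for $x\ge0$ yields \eqref{DEst2} with $M=\max(1,C_0^{-1})\|u_0\|_{L^2(\Omega)}$.

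The step I expect to be the main obstacle is the comparison principle invoked above, i.e. that $\partial^{\alpha}_{0+,t}w+c(t)w\ge0$ on $(0,\infty)$ with $w(0)\ge0$ and $c\ge0$ forces $w\ge0$; although scalar, this is genuinely fractional. The natural routes are the extremum principle for the Caputo derivative (a negative interior minimum of $w$ would force $\partial^{\alpha}_{0+,t}w<0$ there, contradicting the inequality), or the Volterra reformulation $w(t)=w(0)+\frac{1}{\Gamma(\alpha)}\int_0^t(t-s)^{\alpha-1}\bigl[(\partial^{\alpha}_{0+,t}w+cw)(s)-c(s)w(s)\bigr]\,ds$ followed by a Gr\"onwall/Picard iteration; one must also confirm that the regularity built into the weak solution of \cite{Zacher3} makes $t\mapsto u_k(t)$ admissible in the Caputo sense. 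By contrast, the crude energy estimate $\partial^{\alpha}_{0+,t}\|u(t,\cdot)\|_{L^2(\Omega)}^2\le-2\lambda_1\kappa t^{\beta}\|u(t,\cdot)\|_{L^2(\Omega)}^2$ combined with Lemma~\ref{lem1} only yields the weaker rate $(1+t^{\alpha+\beta})^{-1/2}$, which is why the comparison route seems necessary.
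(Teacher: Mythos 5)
Your spectral argument is a genuinely different route from the paper's, and the reason you give for rejecting the energy method rests on a misreading of \eqref{Lp}. That inequality is stated for the norm itself, not its square: multiplying \eqref{1.1-1*} by $u$, integrating by parts, and using \eqref{Lp}, the Poincar\'e inequality and (H), then dividing by $\|u(t,\cdot)\|_{L^2(\Omega)}$, gives the \emph{linear} differential inequality
\[
\partial^{\alpha}_{0+,t}\|u(t,\cdot)\|_{L^2(\Omega)}+\lambda_1\kappa t^{\beta}\|u(t,\cdot)\|_{L^2(\Omega)}\leq 0,\qquad t>0,
\]
so $\|u(t,\cdot)\|_{L^2(\Omega)}$ is a subsolution of \eqref{ode} with $\delta=1$, $\nu=\lambda_1\kappa$, and Lemma \ref{lem1} already yields the full rate $(1+t^{\alpha+\beta})^{-1}$; the loss of a square root occurs only if one works with $\|u\|_{L^2(\Omega)}^2$, which is what you did. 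This is exactly the paper's proof: it needs only $u_0\in L^2(\Omega)$, no eigenfunction expansion, no pointwise Caputo comparison principle, and it covers the whole range $\beta>-\alpha$. Your route, in exchange for more work, buys an explicit $k$-by-$k$ majorant $|u_k(t)|\leq|u_{0k}|E_{\alpha,1+\frac{\beta}{\alpha},\frac{\beta}{\alpha}}(-\lambda_k\kappa t^{\alpha+\beta})$, which is potentially sharper information; but it carries the burden you flag and one you do not: (i) the comparison step must be justified at the level of the weak solution of \cite{Zacher3}, whose natural formulation is through $\partial_t\bigl(k_{1-\alpha}*(u_k-u_{0k})\bigr)$ rather than a pointwise Caputo derivative, and with $a$ merely in $L^1$ (not continuous) both the extremum-principle and the Volterra/Gr\"onwall closures need to be run in that weak, a.e.\ setting; (ii) the bound \eqref{estim1} is quoted for $m>1$, i.e.\ with $m=1+\frac{\beta}{\alpha}$ it covers only $\beta>0$, whereas (H) allows $-\alpha<\beta\leq 0$, so your final estimate as written does not reach the full parameter range of the theorem (the energy proof does). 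In short: your plan is workable once the weak-level comparison lemma is actually proved and the restriction on $\beta$ addressed, but the main obstacle you anticipated for the energy method is not there, and the paper's argument is both shorter and more general.
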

\begin{remark} Note that, the estimate \eqref{DEst2} is optimal, if the function $a(t)$ is bounded from above as follows
$$0<a(t)\leq \kappa_1 t^\beta,\,\,\kappa_1>0,\,t>0.$$
Indeed, let us consider the function
$$v(t,x)=T(t)e_1(x),$$ where $e_1(x)$ is a first eigenfunction of Dirichlet-Laplacian problem \eqref{DirLap}.

Substituting the function $v$ to equation \eqref{1.1-1*} one can get
\begin{align*}0&=\partial^{\alpha}_{0+,t}T(t) + \lambda_1a(t)T(t)\\& \leq \partial^{\alpha}_{0+,t}T(t) + \lambda_1\kappa_1 t^\beta T(t),\end{align*}
which implies that $v$ is a sub-solution of the equation \eqref{1.1-1*}.
\end{remark}
\begin{proof}[Proof of Theorem \ref{th1+}] Multiplying each term in \eqref{1.1-1*} by $u$, and then integrating over $\Omega$, we obtain
\begin{equation*}\int\limits_\Omega u[\partial^{\alpha}_{0+,t}u]dx - a(t)\int\limits_\Omega u\Delta udx = 0.\end{equation*}
Using the inequality (see \cite{Zacher1}) \begin{equation}\label{Lp}
\|u(t,\cdot)\|_{L^2(\Omega)}\partial^{\alpha}_{0+,t}\left(\|u(t,\cdot)\|_{L^2(\Omega)}\right)\leq\int\limits_{\Omega}u[\partial^{\alpha}_{0+,t}u]dx,
\end{equation} and integrating by parts the last in equality, we arrive at
\begin{equation*}\|u(t,\cdot)\|_{L^2(\Omega)}\partial^{\alpha}_{0+,t}\left(\|u(t,\cdot)\|_{L^2(\Omega)}\right) + a(t)\int\limits_\Omega |\nabla u|^2dx\leq 0.\end{equation*}
Now, taking into account (H) and applying the Poincar\'{e} inequality, we obtain
\begin{equation*}\partial^{\alpha}_{0+,t}\left(\|u(t,\cdot)\|_{L^2(\Omega)}\right) + \lambda_1\kappa t^\beta\|u(t,\cdot)\|_{L^2(\Omega)}\leq 0.\end{equation*}
This means that the solution of \eqref{1.1-1*} is a super solution of equation \eqref{1.1-1}, which completes the proof.
\end{proof}
\begin{remark}In a similar way, one can consider a more general non-linear operator as $$\mathcal{A}(u):=\textrm{div}(A(t,x,u,\nabla u)),$$ where $A$ satisfies
$$(A(t,x,u,v),v)\geq \kappa (v,v),\,\kappa>0.$$
Then, repeating the same technique above, one can show that the solution of equation
\begin{equation*}\partial^{\alpha}_{0+,t} u(t,x) - a(t)\textrm{div}(A(t,x,u,\nabla u)) = 0,\,\left({t,x}
\right) \in \Omega_+,\end{equation*}
satisfies the estimate
\begin{align*}\|u(t,\cdot)\|_{L^2(\Omega)}\leq\frac{C}{1+ t^{\alpha+\beta}},\,t\geq 0,\end{align*} where $C$ is a positive constant.
\end{remark}
\subsection{The case of heat equation with general diffusive coefficient}
In this subsection we consider the following heat equation
\begin{equation}\label{1.1-2}u_t(t,x) - a(t)\Delta u(t,x) = 0,\,\left({t,x}
\right) \in\mathbb{R}_+\times \Omega:=\Omega_+,\end{equation} where $a(t)$ is a given function satisfies the condition $$\int\limits_0^t a(s)ds>0,\,t>0.$$
In contrast to other sections, here we can consider a more general function $a(t)$ without restricting condition (H).

\begin{theorem}\label{th2} Let $\int\limits_0^t a(s)ds>0,\,t>0,$ and $u_0 \in H^2(\Omega).$ Then the generalized solution $u$ of problem \eqref{1.1-2}, \eqref{1.2}, \eqref{1.3} exists, it is unique and given as \begin{equation}\label{1.5}u\left( {t,x} \right) = \sum\limits_{k = 1}^\infty {u_{0k} e^{- \lambda_k \int\limits_0^t a(s)ds} e_k\left( x \right)},\,(t,x)\in \Omega_+,\end{equation}
furthemore, the solution $u$ satisfies the following decay rates:
\begin{align*}m_1e^{- \lambda_1 \int\limits_0^t a(s)ds}\leq\|u(t,\cdot)\|_{L^2(\Omega)}\leq M_1e^{- \lambda_1 \int\limits_0^t a(s)ds},\,t\geq 0,\end{align*} where $m_1$ and $M_1$ are positive constants.
\end{theorem}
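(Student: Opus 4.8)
The plan is to follow the scheme of the proof of Theorem~\ref{th1} almost verbatim, exploiting the simplification that for $\alpha=1$ the Kilbas--Saigo function collapses to an ordinary exponential, so every estimate becomes exact rather than two-sided. First I would seek the solution in the form $u(t,x)=\sum_{k\ge1}u_k(t)e_k(x)$ with $u_{0k}=\int_\Omega u_0(x)e_k(x)\,dx$; inserting this into \eqref{1.1-2} and using $-\Delta e_k=\lambda_k e_k$ decouples the PDE into the scalar Cauchy problems $u_k'(t)+\lambda_k a(t)u_k(t)=0$, $u_k(0)=u_{0k}$. Multiplying by the integrating factor $\exp\!\big(\lambda_k\int_0^t a(s)\,ds\big)$ yields the unique solution $u_k(t)=u_{0k}\exp\!\big(-\lambda_k\int_0^t a(s)\,ds\big)$, which produces the claimed series representation of $u$.

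For convergence and regularity I would use that $\int_0^t a(s)\,ds\ge0$ for all $t\ge0$ (it is strictly positive for $t>0$ and vanishes at $t=0$), hence $0<\exp(-\lambda_k\int_0^t a(s)\,ds)\le1$; by Parseval's identity $\|u(t,\cdot)\|_{L^2(\Omega)}^2=\sum_{k\ge1}|u_{0k}|^2\exp(-2\lambda_k\int_0^t a(s)\,ds)\le\|u_0\|_{L^2(\Omega)}^2$, and inserting the extra weight $\lambda_k^2$ shows $\|\Delta u(t,\cdot)\|_{L^2(\Omega)}^2\le\|u_0\|_{H^2(\Omega)}^2<\infty$ together with $\|a(t)^{-1}u_t(t,\cdot)\|_{L^2(\Omega)}=\|\Delta u(t,\cdot)\|_{L^2(\Omega)}<\infty$, because $u_0\in H^2(\Omega)$. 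Continuity in $t$ of the relevant series follows from a standard dominated-convergence argument, so $u$ is a generalized solution in the natural sense. Uniqueness is handled exactly as in Theorem~\ref{th1}: if $u$ is the difference of two solutions, then $u_k(t):=\int_\Omega u(t,x)e_k(x)\,dx$ satisfies, after integrating by parts, $u_k'(t)=-\lambda_k a(t)u_k(t)$ with $u_k(0)=0$, whence $u_k\equiv0$, and completeness of $\{e_k\}$ forces $u\equiv0$.

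Finally, the decay rates are immediate from the ordering $0<\lambda_1\le\lambda_2\le\cdots$: since $\exp(-\lambda_k\int_0^t a(s)\,ds)\le\exp(-\lambda_1\int_0^t a(s)\,ds)$ for every $k$, Parseval gives $\|u(t,\cdot)\|_{L^2(\Omega)}\le\|u_0\|_{L^2(\Omega)}\exp(-\lambda_1\int_0^t a(s)\,ds)$, i.e. the upper bound with $M_1=\|u_0\|_{L^2(\Omega)}$; retaining only the $k=1$ term gives $\|u(t,\cdot)\|_{L^2(\Omega)}\ge|u_{01}|\exp(-\lambda_1\int_0^t a(s)\,ds)$, i.e. the lower bound with $m_1=|u_{01}|$ (assuming, as is implicit, $u_{01}\neq0$). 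I do not expect any genuine obstacle: replacing the Kilbas--Saigo estimate \eqref{estim1} by the exact exponential makes both bounds sharp with explicit constants, and the only point requiring mild care is the justification of term-by-term differentiation in $t$ and $x$, which is supplied by the $H^2$-regularity of $u_0$ exactly as in the proof of Theorem~\ref{th1}.
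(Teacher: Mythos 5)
Your proposal is correct and follows essentially the same route as the paper, which simply states that the proof is identical to that of Theorem~\ref{th1} with equation \eqref{3.3} replaced by the ODE $u_k'(t)+\lambda_k a(t)u_k(t)=0$ — exactly the eigenfunction expansion, integrating-factor solution, Parseval-based convergence, and $k=1$ lower bound you carry out (with the same implicit assumption $u_{01}\neq 0$ that the paper makes).
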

\begin{proof} The proof of Theorem \ref{th2} is identical with the proof of Theorem \ref{th1}. The main difference is that in Theorem \ref{th2} the key role is played by the differential equation
$$u'_k(t)+\lambda_ka(t)u_k(t)=0,$$
instead of equation \eqref{3.3}.
\end{proof}
Choosing the function $a(t)$ explicitly, one can obtain different types of decay of the solutions. Below we present some special cases of the function $a(t).$
\begin{description}
  \item[Exponential decay] Assume that $$a(t):=\beta t^{\beta-1},\,\beta>0,$$ then the decay rate has the form
\begin{align*}m_1e^{- \lambda_1 t^{\beta}}\leq\|u(t,\cdot)\|_{L^2(\Omega)}\leq M_1e^{- \lambda_1 t^{\beta}},\,t\geq 0\end{align*} and the solution can be represented as
\begin{equation*}u\left( {t,x} \right) = \sum\limits_{k = 1}^\infty {u_{0k} e^{- \lambda_1 t^{\beta}} e_k\left( x \right)},\,(t,x)\in \Omega_+.\end{equation*}
  \item[Logarithmic decay] Let $$a(t):=\frac{p}{(1+\log(1+t))(1+t)},\,p>0,$$ then it holds
\begin{align*}\frac{m_1}{(1+\log(1+t))^{p\lambda_1}}\leq\|u(t,\cdot)\|_{L^2(\Omega)}\leq \frac{M_1}{(1+\log(1+t))^{p\lambda_1}},\,t\geq 0.\end{align*} In this case the solution given by
\begin{equation*}u\left( {t,x} \right) = \sum\limits_{k = 1}^\infty {u_{0k} (1+\log(1+t))^{- \lambda_k p} e_k\left( x \right)},\,(t,x)\in \Omega_+.\end{equation*}
  \item[Polynomial decay] Suppose that $$a(t):=q\frac{\sum\limits_{j=1}^mja_jt^{j-1}}{\sum\limits_{j=0}^ma_jt^j},\,q>0,\,a_j>0,\,j=1,...,m,$$ hence, it yields
\begin{align*}m_1\left(\sum\limits_{j=0}^ma_jt^j\right)^{- q\lambda_1}\leq\|u(t,\cdot)\|_{L^2(\Omega)}\leq M_1\left(\sum\limits_{j=0}^ma_jt^j\right)^{- q\lambda_1},\,t\geq 0,\end{align*} and the solution
\begin{equation*}u\left( {t,x} \right) = \sum\limits_{k = 1}^\infty {u_{0k} \left(\sum\limits_{j=0}^ma_jt^j\right)^{- q\lambda_k} e_k\left( x \right)},\,(t,x)\in \Omega_+.\end{equation*}
\end{description}

\section{Decay estimates to the time-fractional nonlinear evolution equations}
In this section, we will study decay estimates of the solutions of some time-fractional nonlinear evolution equations.

For the proof of our result in this section, we will make essential use of the following inequality proved in \cite{Zacher1}
\begin{equation}\label{Lp}
\|u(t,\cdot)\|_{L^2(\Omega)}\partial^{\alpha}_{0+,t}\left(\|u(t,\cdot)\|_{L^2(\Omega)}\right)\leq\int\limits_{\Omega}u[\partial^{\alpha}_{0+,t}u]dx.
\end{equation}
\subsection{$p$-Laplacian sub-diffusion equation}
In this subsection, we focus on the case $$\mathcal{A}(u):=\Delta_p u=div\left(|\nabla u|^{p-2}\nabla u\right),$$ where we study the $p$-Laplacian sub-diffusion equation
\begin{equation}\label{1.1-3}\partial^{\alpha}_{0+,t} u(t,x) - a(t)\Delta_p u(t,x) = 0,\,\left({t,x}
\right) \in \Omega_+.\end{equation}

In general, the existence and uniqueness of a weak local solution $u$ to problem \eqref{1.1-3}, \eqref{1.2}-\eqref{1.3} were obtained in \cite{Zacher2} using the Galerkin approximation method. We will study the behavior of the solution for large $t.$
\begin{theorem}\label{th3}
Let $u$ be a solution of problem \eqref{1.1-3}, \eqref{1.2}-\eqref{1.3} and the function $a(t)$ satisfies (H).
Assume that $\frac{2n}{n+2}\leq p<\infty$ and let $u_0\in L^2(\Omega).$ Then
  \begin{equation}\label{p1}\|u(t,\cdot)\|_{L^2(\Omega)}\leq \frac{M}{1+t^{\frac{\alpha+\beta}{p-1}}},\,t\geq 0,
  \end{equation} where $M>0$ is an arbitrary constant depending on the $L^2(\Omega)$ norm of $u_0$.
\end{theorem}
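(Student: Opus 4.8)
The plan is to follow the energy method that was used for Theorem \ref{th1+}, combined with a Gagliardo–Nirenberg–Sobolev interpolation inequality to relate the $L^2$ norm to the $p$-energy $\int_\Omega|\nabla u|^p\,dx$, and then to close the argument by comparison with the semi-linear fractional ODE of Lemma \ref{lem1}. First I would multiply \eqref{1.1-3} by $u$, integrate over $\Omega$, apply the key inequality \eqref{Lp} and integrate by parts the $p$-Laplacian term to obtain
\begin{equation*}
\|u(t,\cdot)\|_{L^2(\Omega)}\,\partial^{\alpha}_{0+,t}\left(\|u(t,\cdot)\|_{L^2(\Omega)}\right) + a(t)\int_\Omega |\nabla u(t,x)|^p\,dx \leq 0.
\end{equation*}
Invoking hypothesis (H), $a(t)\ge \kappa t^\beta$, this reduces to a differential inequality in which the dissipation term is $\kappa t^\beta\int_\Omega|\nabla u|^p\,dx$.

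The next step is to bound $\int_\Omega|\nabla u|^p\,dx$ from below by a power of $\|u\|_{L^2(\Omega)}$. Here one uses the Gagliardo–Nirenberg–Sobolev inequality: when $\frac{2n}{n+2}\le p<\infty$ one has $\|u\|_{L^2(\Omega)}\le C\|\nabla u\|_{L^p(\Omega)}$ (the embedding $W_0^{1,p}\hookrightarrow L^2$ holds precisely in this range, since then $2\le p^*$), so that
\begin{equation*}
\int_\Omega |\nabla u(t,x)|^p\,dx \ge C^{-p}\,\|u(t,\cdot)\|_{L^2(\Omega)}^p.
\end{equation*}
Substituting this into the differential inequality and writing $H(t)=\|u(t,\cdot)\|_{L^2(\Omega)}$ gives $\partial^\alpha_{0+,t}H(t) + \kappa C^{-p} t^\beta H(t)^{p-1}\le 0$, i.e. $H$ is a sub-solution of equation \eqref{ode} with $\nu=\kappa C^{-p}$ and $\delta=p-1$. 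Then Lemma \ref{lem1}, together with the upper/lower solution comparison principle for the Caputo-derivative (the same comparison used inside the proof of Lemma \ref{lem1}), yields $H(t)\le c_2/(1+t^{(\alpha+\beta)/(p-1)})$, which is \eqref{p1} with $M$ depending on $H(0)=\|u_0\|_{L^2(\Omega)}$.

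The main obstacle, and the point requiring the most care, is the justification that $H(t)=\|u(t,\cdot)\|_{L^2(\Omega)}$ genuinely satisfies the scalar fractional differential inequality in a sense strong enough to apply the comparison principle: the function $t\mapsto\|u(t,\cdot)\|_{L^2}$ need not be smooth, so one must interpret \eqref{Lp} and the resulting inequality in the weak (integrated) sense for which the comparison principle of Vergara–Zacher still applies, and one must ensure the weak solution produced by the Galerkin scheme of \cite{Zacher2} has enough regularity ($u\in L^p(0,T;W_0^{1,p}(\Omega))$ and the energy inequality) for these manipulations to be legitimate. A secondary technical point is the borderline/subcritical behavior: the statement restricts to $p\ge \frac{2n}{n+2}$ precisely so that $L^2(\Omega)\hookleftarrow W_0^{1,p}(\Omega)$; for smaller $p$ one would have to replace the $L^2$ norm on the left by an $L^{n(2-p)/p}$ norm as in the Vergara–Zacher result, which is why that case is excluded here. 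Everything else is a routine combination of the energy estimate, the Sobolev inequality, and Lemma \ref{lem1}.
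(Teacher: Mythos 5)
Your proposal follows essentially the same route as the paper's proof: multiply \eqref{1.1-3} by $u$, apply the fractional energy inequality \eqref{Lp}, use hypothesis (H) together with the Sobolev embedding $W_0^{1,p}(\Omega)\hookrightarrow L^2(\Omega)$ (valid for $\frac{2n}{n+2}\le p<\infty$) to bound the dissipation term below by $Ct^{\beta}\|u(t,\cdot)\|_{L^2(\Omega)}^{p}$, and then conclude via comparison with the scalar fractional equation \eqref{ode} through Lemma \ref{lem1}. Your additional remarks on the weak-sense interpretation of the differential inequality and the regularity of the Galerkin solution are sensible refinements but do not change the argument, which matches the paper's.
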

\begin{remark}
Let $p=2$, then the estimate \eqref{p1} coincides with the estimate \eqref{DEst2} without regard to constants.
\end{remark}
\begin{remark}
Note that problem \eqref{1.1-3}, \eqref{1.2}-\eqref{1.3} was studied in \cite{Zacher1} in the case $\beta=0,$ and the following estimate was obtained
\begin{equation*}\|u(t,\cdot)\|_{L^2(\Omega)}\leq \frac{M}{1+t^{\frac{\alpha}{p-1}}},\,\,\,\text{for}\,\,\,\frac{2n}{n+2}\leq p<\infty,\,t\geq 0,
  \end{equation*}
which is a particular case of estimate \eqref{p1}.
\end{remark}
\begin{remark}
The decay rate \eqref{p1} is optimal for $a(t)=t^\beta$, at least for $p > \frac{2n}{n+2}.$ It can be shown in a similar way as in \cite[P. 236]{Zacher1}.
\end{remark}
\begin{proof}[Proof of Theorem \ref{th3}]
Multiplying \eqref{1.1-3} by $u$ and integrate over $\Omega$ one can obtain
\begin{equation*}\int_{\Omega}u[\partial^{\alpha}_{0+,t}u]dx+ a(t)\|\nabla u(t,\cdot)\|_{L^p(\Omega)}^p= 0,\,\,\, t>0.\end{equation*}
Suppose that $\frac{2N}{N+2}\leq p < \infty.$ Applying the inequality \eqref{Lp} with $q=2$ to the first term of the last equality and applying the Sobolev embedding $$H^1_p(\Omega)\hookrightarrow L_2(\Omega),\, \frac{2N}{N+2}\leq p < \infty,$$ to the second term we obtain
\begin{equation*}\|u(t,\cdot)\|_{L^2(\Omega)}\partial^{\alpha}_{0+,t}\left(\|u(t,\cdot)\|_{L^2(\Omega)}\right)+ Ca(t)\|u(t,\cdot)\|_{L^2(\Omega)}^p\leq 0,\,\,\, t>0.\end{equation*}
Hence, from the last inequality, taking into account (H), it follows that
\begin{equation*}\partial^{\alpha}_{0+,t}\|u(t,\cdot)\|_{L^2(\Omega)}+ Ct^{\beta}\|u(t,\cdot)\|_{L^2(\Omega)}^{p-1}\leq 0,\,\,\, t>0,\end{equation*}
where $C$ is a positive constant. Then $E(t)=\|u(t,\cdot)\|_{L^2(\Omega)}$ is a subsolution of the equation \eqref{ode}. Consequently $\|u(t,\cdot)\|_{L^2(\Omega)}\leq \frac{M}{1+t^{\frac{\alpha+\beta}{p-1}}}.$
The proof is complete.
\end{proof}

\subsection{Time-fractional porous medium equation}
In this subsection, we focus on $$\mathcal{A}(u):=\nabla(g(u) \nabla u),$$ where we study the time-fractional porous-medium equation
\begin{equation}\label{1.1-4}\partial^{\alpha}_{0+,t} u(t,x) - a(t)\nabla(g(u(t,x)) \nabla u(t,x)) = 0,\,\left({t,x}
\right) \in\mathbb{R}_+\times \Omega:=\Omega_+.\end{equation}
Suppose that the function $g(u)$ has the property:
\begin{description}
\item[(H1)] $g(u)\geq c_0 u^{m},\, m\geq 0,\,c_0>0.$
\end{description}

The existence of a weak local solution $u$ to problem \eqref{1.1-4}, \eqref{1.2}-\eqref{1.3} was proven in \cite{Zacher3} using the Galerkin approximation method. We intend to obtain the behavior of the solution for large $t.$
\begin{theorem}\label{th4}
Let $u$ be a solution of problem \eqref{1.1-4}, \eqref{1.2}-\eqref{1.3}. Let $a(t)$ and $g(u)$ satisfy the hypotheses (H) and (H1).
Assume that $u_0\in L^{2}(\Omega)$, then
  \begin{equation}\label{m1}\|u(t,\cdot)\|_{L^{2}(\Omega)}\leq \frac{M}{1+t^{\frac{\alpha+\beta}{m+1}}},\,t\geq 0,
  \end{equation} where $M>0$ is an arbitrary constant depending on the $L^2(\Omega)$ norm of $u_0$.
\end{theorem}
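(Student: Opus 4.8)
The plan is to mimic the strategy used in the proof of Theorem \ref{th3}, reducing the problem to the scalar fractional ODE \eqref{ode} and then invoking Lemma \ref{lem1}. First I would multiply equation \eqref{1.1-4} by $u$ and integrate over $\Omega$, which after integration by parts (using the Dirichlet boundary condition \eqref{1.3}) gives
\begin{equation*}\int\limits_\Omega u[\partial^{\alpha}_{0+,t}u]\,dx + a(t)\int\limits_\Omega g(u(t,x))|\nabla u(t,x)|^2\,dx = 0,\,\,\,t>0.\end{equation*}
To the first term I apply inequality \eqref{Lp}, obtaining $\|u(t,\cdot)\|_{L^2(\Omega)}\partial^{\alpha}_{0+,t}\|u(t,\cdot)\|_{L^2(\Omega)}$ as a lower bound. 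To the second term I use hypothesis (H1), $g(u)\geq c_0 u^m$, to bound it below by $c_0\int\limits_\Omega u^m|\nabla u|^2\,dx$; writing $u^m|\nabla u|^2 = \frac{4}{(m+2)^2}\left|\nabla\!\left(u^{\frac{m+2}{2}}\right)\right|^2$ lets me recast this as a Dirichlet energy for the function $w := u^{(m+2)/2}$.

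The key step is then a Sobolev/Poincaré-type estimate relating $\|\nabla w\|_{L^2(\Omega)}$ to $\|u\|_{L^2(\Omega)}$. Applying the Poincaré inequality to $w$ gives $\|\nabla w\|_{L^2(\Omega)}^2 \geq \lambda_1 \|w\|_{L^2(\Omega)}^2 = \lambda_1 \|u^{(m+2)/2}\|_{L^2(\Omega)}^2 = \lambda_1\|u\|_{L^{m+2}(\Omega)}^{m+2}$, and since $\Omega$ is bounded, $\|u\|_{L^2(\Omega)} \leq |\Omega|^{\frac{m}{2(m+2)}}\|u\|_{L^{m+2}(\Omega)}$ by Hölder, so $\|u\|_{L^{m+2}(\Omega)}^{m+2} \geq C\|u\|_{L^2(\Omega)}^{m+2}$. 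Combining the three bounds with hypothesis (H), $a(t)\geq \kappa t^\beta$, yields
\begin{equation*}\partial^{\alpha}_{0+,t}\|u(t,\cdot)\|_{L^2(\Omega)} + C t^\beta \|u(t,\cdot)\|_{L^2(\Omega)}^{m+1}\leq 0,\,\,\,t>0,\end{equation*}
after dividing through by $\|u(t,\cdot)\|_{L^2(\Omega)}$. Thus $E(t):=\|u(t,\cdot)\|_{L^2(\Omega)}$ is a subsolution of equation \eqref{ode} with $\delta = m+1$ and $\nu = C$, and Lemma \ref{lem1} (together with the comparison argument already invoked there) delivers $\|u(t,\cdot)\|_{L^2(\Omega)} \leq \frac{M}{1+t^{\frac{\alpha+\beta}{m+1}}}$, which is \eqref{m1}.

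The main obstacle I anticipate is the Sobolev-embedding step: the identity $u^m|\nabla u|^2 = \frac{4}{(m+2)^2}|\nabla(u^{(m+2)/2})|^2$ and the subsequent Poincaré inequality require enough regularity and integrability of $u^{(m+2)/2}$ to belong to $H_0^1(\Omega)$, which for a merely weak local solution with $u_0\in L^2(\Omega)$ is not automatic and, as in the constant-coefficient treatment of \cite{Zacher1}, likely needs a restriction on $m$ relative to the dimension $n$ (analogous to the range $\frac{2n}{n+2}\le p<\infty$ appearing in Theorem \ref{th3}) or an a priori boundedness/smoothing assumption on the solution; the rigorous justification would proceed via approximation by the Galerkin solutions of \cite{Zacher3} and passage to the limit. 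The remaining computations (the Hölder step, the constant bookkeeping, and the application of Lemma \ref{lem1}) are routine.
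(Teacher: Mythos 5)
Your proposal follows essentially the same route as the paper's proof: test the equation with $u$, apply \eqref{Lp} and (H1), rewrite $u^{m}|\nabla u|^{2}$ as the Dirichlet energy of $u^{(m+2)/2}$, use Poincar\'e together with the embedding $L^{m+2}(\Omega)\hookrightarrow L^{2}(\Omega)$, and conclude by comparison with \eqref{ode} via Lemma \ref{lem1}. The only differences are cosmetic: the paper argues at the same formal level without the regularity caveat you raise, and the dimensional restriction on $m$ you worry about is not needed for this upper bound, since the $L^{m+2}$--$L^{2}$ comparison is just H\"older on the bounded domain, exactly as you yourself use it.
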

\begin{remark}
The problem \eqref{1.1-3}, \eqref{1.2}-\eqref{1.3} was studied in \cite{Vald2} when $\beta=0,$ and the following estimate was obtained
\begin{equation*}\|u(t,\cdot)\|_{L^2(\Omega)}\leq \frac{M}{1+t^{\frac{\alpha}{m+1}}},\,t\geq 0,
  \end{equation*}
which is a particular case of estimate \eqref{m1}.
\end{remark}
\begin{remark}
The decay rate \eqref{m1} is optimal for $a(t)=t^\beta$ and $g(u)=u^{m}$, at least for $m > \frac{2n}{n+2}.$ It can be proven in the same way as in \cite[P. 238]{Zacher1}.
\end{remark}
\begin{proof}[Proof of Theoren \ref{th4}] Multiplying the equation \eqref{1.1-4} by $u$ and integrating by parts over the domain $\Omega$ we get
$$\int_{\Omega} u(t,x) [\partial^{\alpha}_{0+,t} u(t,x)]dx+\kappa t^{\beta}\int_{\Omega} g(u(t,x))|\nabla u(t,x)|^2dx\leq 0,\,\,\,\,\, t\in (0,T),$$ in view of (H).

We simplify the second term of the last inequality in the following form
\begin{align*}\int_{\Omega} g(u(t,x))|\nabla u(t,x)|^2dx&\geq c_0\int_{\Omega} u^{m}(t,x)|\nabla u(t,x)|^2dx\\&=c_0\int_{\Omega} | u^\frac{m}{2}(t,x) \nabla u(t,x)|^2dx\\&\geq C\int_{\Omega} |\nabla u^\frac{m+2}{2}(t,x)|^2dx,\,C>0.\end{align*}
Applying the Poincar\'{e} inequality and the embedding theorem, we have
\begin{align*}\int_{\Omega} |\nabla u^\frac{m+2}{2}(t,x)|^2dx&\geq\lambda_1\int_{\Omega} |u^\frac{m+2}{2}(t,x)|^2dx\\&=\lambda_1\int_{\Omega} |u(t,x)|^{m+2}dx\\&= \lambda_1\|u(t,\cdot)\|_{L^{m+2}(\Omega)}^{m+2} \\&\geq C \|u(t, \cdot)\|_{L^{2}(\Omega)}^{m+2},\end{align*} where $C>0.$

The inequality \eqref{Lp}, the hypothesis (H1) and the above calculations give
$$\partial^{\alpha}_{0+,t}\|u(t, \cdot)\|_{L^{2}(\Omega)}+Ct^\beta\|u(t, \cdot)\|_{L^{2}(\Omega)}^{m+1}\leq 0,\,\,\, t>0.$$
Then $E(t)=\|u(t,\cdot)\|_{L^{2}(\Omega)}$ is a subsolution of the equation \eqref{ode} and from this it follows \eqref{m1}.
\end{proof}

\subsection{Time-fractional degenerate diffusion equation}

In this subsection, we are interested in case $$\mathcal{A}(u):= f(u)\Delta u,$$ where we study the time-fractional degenerate diffusion equation
\begin{equation}\label{1.1-4+}\partial^{\alpha}_{0+,t} u(t,x) - a(t) f(u(t,x))\Delta u(t,x) = 0,\,\left({t,x}
\right) \in \Omega_+.\end{equation}
Let us assume that the function $f(u)$ has the property:
\begin{description}
\item[(H2)] $f(u)u\geq c_1 u^{q+1},\, q\geq 0,\,c_1>0.$
\end{description}
\begin{theorem}\label{th4+}
Let $u$ be a solution of problem \eqref{1.1-4+}, \eqref{1.2}-\eqref{1.3}. Let $a(t)$ and $f(u)$ satisfy the hypotheses (H) and (H2).
Assume that $u_0\in L^{2}(\Omega),$ then
\begin{equation}\label{m2}\|u(t,\cdot)\|_{L^{2}(\Omega)}\leq \frac{M}{1+t^{\frac{\alpha+\beta}{q+1}}},\,t\geq 0,
\end{equation} holds true, where $M>0$ is an arbitrary constant depending on the $L^2(\Omega)$ norm of $u_0$.
\end{theorem}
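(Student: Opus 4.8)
The plan is to reproduce the scheme of the proofs of Theorems \ref{th3} and \ref{th4}: test \eqref{1.1-4+} against $u$, reduce the problem to a scalar fractional differential inequality for $E(t):=\|u(t,\cdot)\|_{L^2(\Omega)}$, and then compare with equation \eqref{ode}. Multiplying \eqref{1.1-4+} by $u$ and integrating over $\Omega$ gives
$$\int_\Omega u\,[\partial^{\alpha}_{0+,t}u]\,dx - a(t)\int_\Omega u\,f(u)\,\Delta u\,dx = 0,$$
and the inequality \eqref{Lp} turns the first term into $E(t)\,\partial^{\alpha}_{0+,t}E(t)$. Everything then rests on a lower bound of the form $\int_\Omega u\,f(u)\,(-\Delta u)\,dx\geq C\,E(t)^{q+2}$.

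To obtain it I would integrate by parts, $\int_\Omega u\,f(u)\,(-\Delta u)\,dx=\int_\Omega\bigl(f(u)+u f'(u)\bigr)|\nabla u|^2\,dx$, discard the nonnegative term $\int_\Omega u f'(u)|\nabla u|^2\,dx$ (nonnegative once $u\geq0$ and $f$ is nondecreasing, the natural monotonicity that renders the equation degenerate-parabolic), and use the consequence $f(u)\geq c_1u^{q}$ of (H2), so that $\int_\Omega u\,f(u)\,(-\Delta u)\,dx\geq c_1\int_\Omega u^{q}|\nabla u|^2\,dx=\tfrac{4c_1}{(q+2)^2}\int_\Omega|\nabla u^{\frac{q+2}{2}}|^2\,dx$. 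Poincar\'e's inequality bounds this below by $\tfrac{4c_1\lambda_1}{(q+2)^2}\|u(t,\cdot)\|_{L^{q+2}(\Omega)}^{q+2}$, and, $\Omega$ being bounded with $q+2\geq2$, the embedding $L^{q+2}(\Omega)\hookrightarrow L^2(\Omega)$ gives $\|u(t,\cdot)\|_{L^{q+2}(\Omega)}^{q+2}\geq C\,E(t)^{q+2}$. Combining these estimates with hypothesis (H) yields $E(t)\,\partial^{\alpha}_{0+,t}E(t)+C\kappa\,t^{\beta}E(t)^{q+2}\leq0$, hence $\partial^{\alpha}_{0+,t}E(t)+C\kappa\,t^{\beta}E(t)^{q+1}\leq0$.

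This last inequality says exactly that $E(t)=\|u(t,\cdot)\|_{L^2(\Omega)}$ is a subsolution of \eqref{ode} with $\nu=C\kappa$ and $\delta=q+1$; by the upper-solution comparison used to prove Lemma \ref{lem1} (with the supersolution $\tilde H$ built from $E(0)=\|u_0\|_{L^2(\Omega)}$), one concludes $E(t)\leq\dfrac{M}{1+t^{(\alpha+\beta)/(q+1)}}$, which is \eqref{m2}.

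The main obstacle, and the only genuinely new point compared with Theorem \ref{th4}, is the term $\int_\Omega u\,f(u)\,\Delta u\,dx$: since $f(u)\Delta u$ is not in divergence form, integration by parts leaves the extra contribution $\int_\Omega u f'(u)|\nabla u|^2\,dx$, and disposing of it requires a sign condition on $f'$ (monotone $f$, or at least that $u\mapsto uf(u)$ be nondecreasing) together with $u\geq0$ from the maximum principle; checking that the Galerkin-built weak solution of \eqref{1.1-4+}, \eqref{1.2}--\eqref{1.3} indeed inherits these structural properties, so that all the formal integrations by parts and the reduction to \eqref{ode} are legitimate, is the delicate part of making the argument fully rigorous.
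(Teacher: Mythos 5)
Your argument is essentially the paper's own proof: test \eqref{1.1-4+} with $u$, use \eqref{Lp}, rewrite $\int_\Omega u^{q}|\nabla u|^2\,dx$ as a multiple of $\int_\Omega|\nabla u^{\frac{q+2}{2}}|^2\,dx$, apply Poincar\'e together with the embedding $L^{q+2}(\Omega)\hookrightarrow L^{2}(\Omega)$, and then compare $E(t)=\|u(t,\cdot)\|_{L^{2}(\Omega)}$ with equation \eqref{ode} via Lemma \ref{lem1}. The only (minor) deviation is the treatment of the non-divergence term: the paper passes directly from $uf(u)(-\Delta u)$ to $c_1u^{q+1}(-\Delta u)=c_1\nabla u^{q+1}\cdot\nabla u$ under the integral by invoking (H2) (a step just as formal as yours), whereas you integrate by parts first and discard $\int_\Omega uf'(u)|\nabla u|^2\,dx$ under added positivity/monotonicity assumptions, so your version merely makes the hidden structural hypotheses more explicit.
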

\begin{proof} Multiplying the equation \eqref{1.1-4+} by $u$ and integrating by parts over the domain $\Omega$, we arrive at
$$\int_{\Omega} u(t,x) [\partial^{\alpha}_{0+,t} u(t,x)]dx+c_1\kappa t^\beta\int_{\Omega} \nabla u^{q+1}(t,x) \nabla u(t,x) dx\leq 0,\,\,\,\,\, t>0,$$ from the hypotheses (H) and (H2).

Hence, the second term of the last inequality can be written as
\begin{align*}\int_{\Omega} \nabla u^{q+1}(t,x) \nabla u(t,x) dx&=(q+1)\int_{\Omega} u^{q}(t,x)|\nabla u(t,x)|^2dx\\&=(q+1)\int_{\Omega} | u^\frac{q}{2}(t,x) \nabla u(t,x)|^2dx\\&=\frac{2(q+1)}{q+2}\int_{\Omega} |\nabla u^\frac{q+2}{2}(t,x)|^2dx.\end{align*}
Using the Poincar\'{e} inequality and the embedding theorem, we deduce that
\begin{align*}\int_{\Omega} |\nabla u^\frac{q+2}{2}(t,x)|^2dx&\geq\lambda_1\int_{\Omega} |u^\frac{q+2}{2}(t,x)|^2dx\\&=\lambda_1\int_{\Omega} |u(t,x)|^{q+2}dx\\&= \lambda_1\|u(t,\cdot)\|_{L^{q+2}(\Omega)}^{q+2} \\&\geq C \|u(t, \cdot)\|_{L^{2}(\Omega)}^{q+2},\end{align*} where $C>0.$

The inequality \eqref{Lp} and the above calculations give
$$\partial^{\alpha}_{0+,t}\|u(t, \cdot)\|_{L^{2}(\Omega)}+Ct^\beta\|u(t, \cdot)\|_{L^{2}(\Omega)}^{q+1}\leq 0,\,\,\, t>0.$$
Then $E(t)=\|u(t,\cdot)\|_{L^{2}(\Omega)}$ is a subsolution of the equation \eqref{ode} which gives \eqref{m2}.
\end{proof}

\subsection{Time-fractional mean curvature equation}
We consider the case $$\mathcal{A}(u):=\text{div}\left(\frac{\nabla u(t,x)}{1+|\nabla u(t,x)|^2}\right).$$ In this case the equation \eqref{1.1} coincides with mean curvature sub-diffusion equation
\begin{equation}\label{1.1-5}\partial^{\alpha}_{0+,t} u(t,x) - a(t)\text{div}\left(\frac{\nabla u(t,x)}{{1+|\nabla u(t,x)|^2}}\right) = 0,\,\left({t,x}
\right) \in \Omega_+.\end{equation}

\begin{theorem}\label{th5}
Assume that $u$ is the solution of problem \eqref{1.1-5}, \eqref{1.2}-\eqref{1.3}. Let $u_0\in L^2(\Omega)$ and let $a(t)$ satisfies (H). Suppose that either
\begin{equation}\label{36}
n\in \{1,2\}\,\,\,\,\,and \,\,\,\,\, \sup_{t>0}\int_{\Omega}\sqrt{1+|\nabla u(t,x)|^2}dx<+\infty,
\end{equation}
or
\begin{equation}\label{37}
\sup_{x\in \Omega, \, t>0}|\nabla u(t,x)|<+\infty,\,n\geq 3,
\end{equation}
it holds tone
\begin{equation}\label{mc1}
\|u(t,\cdot)\|_{L^2(\Omega)}\leq \frac{M}{1+t^{\alpha+\beta}},
\end{equation}
where $M>0$ is an arbitrary constant depending on the $L^2(\Omega)$ norm of $u_0$.
\end{theorem}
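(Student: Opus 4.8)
The plan is to follow the same scheme as in the proofs of Theorems~\ref{th3}, \ref{th4} and \ref{th4+}, namely to reduce the problem to a scalar fractional differential inequality for $E(t):=\|u(t,\cdot)\|_{L^{2}(\Omega)}$ and then to apply Lemma~\ref{lem1} with $\delta=1$. Multiplying the mean curvature sub-diffusion equation~\eqref{1.1-5} by $u$, integrating over $\Omega$ and integrating the divergence term by parts (the boundary term vanishes by~\eqref{1.3}) gives
\begin{equation*}
\int_{\Omega}u(t,x)\,[\partial^{\alpha}_{0+,t}u(t,x)]\,dx+a(t)\int_{\Omega}\frac{|\nabla u(t,x)|^{2}}{\sqrt{1+|\nabla u(t,x)|^{2}}}\,dx=0,\qquad t>0.
\end{equation*}
Using the hypothesis (H) and the inequality~\eqref{Lp}, this yields
\begin{equation*}
E(t)\,\partial^{\alpha}_{0+,t}E(t)+\kappa\, t^{\beta}\int_{\Omega}\frac{|\nabla u(t,x)|^{2}}{\sqrt{1+|\nabla u(t,x)|^{2}}}\,dx\le 0,\qquad t>0.
\end{equation*}

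Everything then hinges on a coercivity estimate $\int_{\Omega}\frac{|\nabla u|^{2}}{\sqrt{1+|\nabla u|^{2}}}\,dx\ge C\,E(t)^{2}$ with a constant $C>0$ independent of $t$; this is exactly the kind of structural condition used in \cite{Vald2} (with $s=2$ and $\gamma=1$). Under assumption~\eqref{37} it is immediate: setting $K:=\sup_{x\in\Omega,\,t>0}|\nabla u(t,x)|<+\infty$ we have $\sqrt{1+|\nabla u|^{2}}\le\sqrt{1+K^{2}}$, hence
\begin{equation*}
\int_{\Omega}\frac{|\nabla u|^{2}}{\sqrt{1+|\nabla u|^{2}}}\,dx\ge\frac{1}{\sqrt{1+K^{2}}}\,\|\nabla u(t,\cdot)\|_{L^{2}(\Omega)}^{2}\ge\frac{\lambda_{1}}{\sqrt{1+K^{2}}}\,E(t)^{2}
\end{equation*}
by the Poincar\'e inequality. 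Under assumption~\eqref{36} I would instead start from the Cauchy--Schwarz inequality
\begin{equation*}
\|\nabla u(t,\cdot)\|_{L^{1}(\Omega)}=\int_{\Omega}\frac{|\nabla u|}{(1+|\nabla u|^{2})^{1/4}}\,(1+|\nabla u|^{2})^{1/4}\,dx\le\left(\int_{\Omega}\frac{|\nabla u|^{2}}{\sqrt{1+|\nabla u|^{2}}}\,dx\right)^{\!1/2}\left(\int_{\Omega}\sqrt{1+|\nabla u|^{2}}\,dx\right)^{\!1/2},
\end{equation*}
so that, with $C_{0}:=\sup_{t>0}\int_{\Omega}\sqrt{1+|\nabla u(t,x)|^{2}}\,dx<+\infty$, one gets $\int_{\Omega}\frac{|\nabla u|^{2}}{\sqrt{1+|\nabla u|^{2}}}\,dx\ge C_{0}^{-1}\|\nabla u(t,\cdot)\|_{L^{1}(\Omega)}^{2}$. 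Since $n\in\{1,2\}$, the borderline Sobolev embedding $W^{1,1}_{0}(\Omega)\hookrightarrow L^{2}(\Omega)$ (which for $n=2$ is the Gagliardo--Nirenberg inequality $\|u\|_{L^{2}}\le C\|\nabla u\|_{L^{1}}$, and for $n=1$ follows from $\|u\|_{L^{\infty}(\Omega)}\le\|\nabla u\|_{L^{1}(\Omega)}$ together with $|\Omega|<\infty$) gives $\|\nabla u(t,\cdot)\|_{L^{1}(\Omega)}\ge c\,E(t)$, whence the desired bound $\int_{\Omega}\frac{|\nabla u|^{2}}{\sqrt{1+|\nabla u|^{2}}}\,dx\ge c^{2}C_{0}^{-1}E(t)^{2}$.

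In both cases we arrive at $E(t)\,\partial^{\alpha}_{0+,t}E(t)+C_{1}t^{\beta}E(t)^{2}\le 0$ with $C_{1}>0$, i.e.\ $\partial^{\alpha}_{0+,t}E(t)+C_{1}t^{\beta}E(t)\le 0$, so that $E(t)=\|u(t,\cdot)\|_{L^{2}(\Omega)}$ is a subsolution of equation~\eqref{ode} with $\nu=C_{1}$, $\delta=1$ and $E(0)=\|u_{0}\|_{L^{2}(\Omega)}$. The comparison argument behind Lemma~\ref{lem1} then gives $\|u(t,\cdot)\|_{L^{2}(\Omega)}\le\frac{M}{1+t^{\alpha+\beta}}$, which is~\eqref{mc1}. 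The main obstacle is the coercivity step under assumption~\eqref{36}: one has to marry the Cauchy--Schwarz trick with the endpoint $W^{1,1}$-Sobolev embedding and use the uniform-in-time area bound to absorb the denominator; the remaining steps are the by-now routine reduction to a scalar fractional differential inequality followed by Lemma~\ref{lem1}.
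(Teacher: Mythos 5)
Your proposal is correct and follows essentially the same route as the paper: multiply \eqref{1.1-5} by $u$, use \eqref{Lp} and (H), reduce to $\partial^{\alpha}_{0+,t}E+Ct^{\beta}E\le 0$ for $E(t)=\|u(t,\cdot)\|_{L^{2}(\Omega)}$, and conclude via the subsolution comparison with \eqref{ode} (Lemma~\ref{lem1} with $\delta=1$). The only difference is that you prove the coercivity bound $\int_{\Omega}\tfrac{|\nabla u|^{2}}{\sqrt{1+|\nabla u|^{2}}}\,dx\ge C\|u\|_{L^{2}(\Omega)}^{2}$ under \eqref{36} or \eqref{37} directly (Cauchy--Schwarz plus the endpoint $W^{1,1}_{0}\hookrightarrow L^{2}$ embedding, respectively the uniform gradient bound plus Poincar\'e), whereas the paper simply cites this estimate from \cite{Vald2}.
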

\begin{remark}
The problem \eqref{1.1-5}, \eqref{1.2}-\eqref{1.3} was studied in \cite{Vald2} with $\beta=0,$ and the estimate was proven
\begin{equation*}\|u(t,\cdot)\|_{L^2(\Omega)}\leq \frac{M}{1+t^{{\alpha}}},
  \end{equation*}
which coincides with the estimate \eqref{mc1} when $\beta=0$.
\end{remark}
\begin{proof}[Proof of Theorem \ref{th5}]
Multiplying the equation \eqref{1.1-5} by $u$ and integrating by parts over the domain $\Omega$, we have
$$\int_{\Omega} u(t,x) [\partial^{\alpha}_{0+,t} u(t,x)]dx+a(t)\int_{\Omega} \frac{|\nabla u(t,x)|^2}{{1+|\nabla u(t,x)|^2}} dx= 0,\,\,\,\,\, t>0.$$

According to the results of \cite{Vald2}, it is true that
\begin{equation*}
\int_{\Omega}\frac{|\nabla u(t,x)|^2}{\sqrt{1+|\nabla u(t,x)|^2}}dx\geq C||u||^2_{L^2(\Omega)},\,C>0,\end{equation*} where either $$n\in \{1,2\}\,\,\,\,\,and \,\,\,\,\, \sup_{t>0}\int_{\Omega}\sqrt{1+|\nabla u(t,x)|^2}dx<+\infty,$$ or $$\sup_{x\in \Omega, \, t>0}|\nabla u(t,x)|<+\infty,\,\,n\geq 3.$$
Then, taking into account (H) and \eqref{Lp}, we obtain
$$\partial^{\alpha}_{0+,t}\|u(t, \cdot)\|_{L^{2}(\Omega)}+Ct^\beta\|u(t, \cdot)\|_{L^{2}(\Omega)}\leq 0,\,\,\, t>0.$$
The function $E(t)=\|u(t,\cdot)\|_{L^{2}(\Omega)}$ is a subsolution of the equation \eqref{ode} which implies \eqref{mc1}.
\end{proof}

\subsection{Sub-diffusion equation with Kirchhoff nonlinearity}

In this subsection, we consider the operator $$\mathcal{A}(u):=M\left(\|\nabla u(t,\cdot)\|_{L^{q}(\Omega)}\right)\Delta_p u,$$
where $q>1,$ $p>1.$ Now, we study the time-fractional diffusion equation with Kirchhoff nonlinearity
\begin{equation}\label{1.1-4}\partial^{\alpha}_{0+,t} u(t,x) - a(t)M\left(\|\nabla u(t,\cdot)\|_{L^{q}(\Omega)}\right)\Delta_p u(t,x) = 0,\,\left({t,x}
\right) \in \Omega_+.\end{equation}

Suppose that the following hypothesis hold:\\
\begin{description}
\item[(H3)] $M\left(s\right)\geq b s^\gamma,\,s>0,\,b>0,\,\gamma\geq 0.$
\end{description}
\begin{theorem}\label{th4-1}
Let $u$ be a solution of problem \eqref{1.1-4}, \eqref{1.2}-\eqref{1.3} with initial data $u_0\in L^{2}(\Omega)$ and let $\frac{2n}{n+2}\leq p,q <\infty.$ Assume that the hypotheses (H) and (H3) holds true with $\gamma$. Then
  \begin{equation}\label{k1}\|u(t,\cdot)\|_{L^{2}(\Omega)}\leq \frac{M}{1+t^{\frac{\alpha+\beta}{\gamma+p-1}}},\,t\geq 0,
  \end{equation} where $M>0$ is an arbitrary constant depending on the $L^2(\Omega)$ norm of $u_0$.
\end{theorem}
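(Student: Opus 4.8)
The plan is to follow the same energy-method template used for Theorems \ref{th3}, \ref{th4}, \ref{th4+} and \ref{th5}: multiply the equation by $u$, integrate over $\Omega$, use the scalar inequality \eqref{Lp} on the fractional term, exploit the structural hypotheses (H) and (H3) together with an appropriate Poincar\'e/Sobolev estimate to turn the nonlinear diffusion term into a power of $\|u(t,\cdot)\|_{L^2(\Omega)}$, and finally invoke Lemma \ref{lem1} via a sub-/super-solution comparison to extract the polynomial decay rate. The only genuinely new bookkeeping is keeping track of how the Kirchhoff factor $M(\|\nabla u\|_{L^q})$ contributes the extra exponent $\gamma$ to the final rate $\frac{\alpha+\beta}{\gamma+p-1}$.

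First I would multiply \eqref{1.1-4} by $u$ and integrate by parts over $\Omega$; since $u$ vanishes on $\partial\Omega$ this yields
\begin{equation*}
\int_\Omega u\,[\partial^\alpha_{0+,t}u]\,dx + a(t)\,M\!\left(\|\nabla u(t,\cdot)\|_{L^q(\Omega)}\right)\int_\Omega |\nabla u(t,x)|^p\,dx = 0,\quad t>0.
\end{equation*}
Applying \eqref{Lp} to the first term and (H) to the coefficient, I get
\begin{equation*}
\|u(t,\cdot)\|_{L^2(\Omega)}\,\partial^\alpha_{0+,t}\|u(t,\cdot)\|_{L^2(\Omega)} + \kappa t^\beta\,M\!\left(\|\nabla u\|_{L^q}\right)\|\nabla u\|_{L^p}^p \le 0 .
\end{equation*}
Now (H3) gives $M(\|\nabla u\|_{L^q})\ge b\,\|\nabla u\|_{L^q}^\gamma$. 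The heart of the estimate is to bound the product $\|\nabla u\|_{L^q}^\gamma\,\|\nabla u\|_{L^p}^p$ from below by a constant times $\|u\|_{L^2(\Omega)}^{\gamma+p}$. For the $L^p$-factor this is exactly the step used in Theorem \ref{th3}: the Sobolev embedding $H^1_p(\Omega)\hookrightarrow L^2(\Omega)$ (valid for $\frac{2n}{n+2}\le p<\infty$) combined with the Poincar\'e inequality gives $\|\nabla u\|_{L^p}^p \ge C\|u\|_{L^2(\Omega)}^p$. For the $L^q$-factor, since $\gamma\ge 0$, I only need a \emph{lower} bound, so the analogous embedding $H^1_q(\Omega)\hookrightarrow L^2(\Omega)$ for $\frac{2n}{n+2}\le q<\infty$ together with Poincar\'e yields $\|\nabla u\|_{L^q}^\gamma \ge C\|u\|_{L^2(\Omega)}^\gamma$ (when $\gamma=0$ this factor is simply $1$ and nothing is lost). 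Multiplying the two bounds produces
\begin{equation*}
\partial^\alpha_{0+,t}\|u(t,\cdot)\|_{L^2(\Omega)} + C t^\beta \|u(t,\cdot)\|_{L^2(\Omega)}^{\gamma+p-1} \le 0,\quad t>0,
\end{equation*}
after dividing by $\|u(t,\cdot)\|_{L^2(\Omega)}$, where $C>0$ depends on $\kappa,b$, the embedding constants, $\lambda_1$, and $|\Omega|$.

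Finally, set $E(t)=\|u(t,\cdot)\|_{L^2(\Omega)}$; the displayed differential inequality says that $E$ is a subsolution of \eqref{ode} with $\nu=C$, $\delta=\gamma+p-1>0$ (note $p>1$ so $\delta>0$ even when $\gamma=0$). Comparing $E$ with the explicit super-solution $\tilde H$ constructed in the proof of Lemma \ref{lem1} (with $H_0$ chosen $\ge \|u_0\|_{L^2(\Omega)}$) gives $E(t)\le c_2/(1+t^{(\alpha+\beta)/(\gamma+p-1)})$, which is precisely \eqref{k1}. The main obstacle — and the only place requiring care — is justifying the sub-solution comparison rigorously for the merely weak (Galerkin) solution of \eqref{1.1-4}: one must make sure the fractional differential inequality for $E(t)$ holds in the appropriate (integrated) sense so that the comparison principle underlying Lemma \ref{lem1} applies; this is handled exactly as in \cite[Section 7]{Zacher1}, to which I would refer rather than reproving the comparison principle here.
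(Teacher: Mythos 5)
Your proposal is correct and follows essentially the same route as the paper: testing with $u$, applying \eqref{Lp} together with (H) and (H3), bounding $\|\nabla u\|_{L^q}^{\gamma}$ and $\|\nabla u\|_{L^p}^{p}$ from below by powers of $\|u\|_{L^2(\Omega)}$ via the Sobolev embeddings for $\tfrac{2n}{n+2}\le p,q<\infty$, and then concluding that $E(t)=\|u(t,\cdot)\|_{L^2(\Omega)}$ is a subsolution of \eqref{ode} with $\delta=\gamma+p-1$, so Lemma \ref{lem1} gives \eqref{k1}. Your extra remark on justifying the comparison for weak (Galerkin) solutions, handled as in \cite{Zacher1}, is a reasonable point of care that the paper leaves implicit.
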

\begin{example} Let $p=q=2$ and $M(s)=k_1+k_2 s,\,k_1, k_2>0.$ Hence, instead of the equation \eqref{1.1-4} we get the equation \begin{equation*}\partial^{\alpha}_{0+,t} u(t,x) - a(t)\left(k_1+k_2\|\nabla u(t,\cdot)\|_{L^{2}(\Omega)}\right)\Delta u(t,x) = 0.\end{equation*}
Then, by Theorem \ref{th4}, we have $$\|u(t,\cdot)\|_{L^{2}(\Omega)}\leq \frac{M}{1+t^{\frac{\alpha+\beta}{2}}}.$$ In the same way, one can consider different examples for the functions
$$M(s)=k,k=const>0,$$
$$M(s)=e^{ks}\geq 1+ks>ks,k>0,$$
$$M(s)=(1+s)^\gamma\geq 1+s>s,\gamma\geq 1,$$
as well as other functions satisfying the hypothesis (H3).
\end{example}
\begin{proof}[Proof of Theorem \ref{th4-1}] Multiplying the equation \eqref{1.1-4} by $u$ and integrating by parts over the domain $\Omega$, we obtain
$$\int_{\Omega} u(t,x)[ \partial^{\alpha}_{0+,t} u(t,x)]dx+a(t)M\left(\|\nabla u(t,\cdot)\|_{L^{q}(\Omega)}\right)\int_{\Omega} |\nabla u(t,x)|^p dx= 0,\,\,\,\,\, t>0.$$
From \eqref{Lp} and the hypotheses (H) and (H3), it follows that
$$\partial^{\alpha}_{0+,t}\|u(t, \cdot)\|_{L^{2}(\Omega)}+\kappa t^\beta \|\nabla u(t,\cdot)\|^{\gamma}_{L^{q}(\Omega)}\|\nabla u(t,\cdot)\|^{p}_{L^{p}(\Omega)}\leq 0,\,\,\,\,\, t>0.$$
Let $\frac{2n}{n+2}\leq p <\infty$ and $\frac{2n}{n+2}\leq q <\infty.$ Then, from the Sobolev inequality, it yields
$$\|\nabla u(t,\cdot)\|^{\gamma}_{L^{q}(\Omega)}\geq C_1\|u(t,\cdot)\|^{\gamma}_{L^{2}(\Omega)}$$
and
$$\|\nabla u(t,\cdot)\|^{p}_{L^{p}(\Omega)}\geq C_2\|u(t,\cdot)\|^{p}_{L^{2}(\Omega)},$$
where $C_1>0$ and $C_2>0$ are constants of Sobolev inequality.
Hence, we deduce that
$$\partial^{\alpha}_{0+,t}\|u(t, \cdot)\|_{L^{2}(\Omega)}+Ct^\beta\|u(t, \cdot)\|_{L^{2}(\Omega)}^{\gamma+p-1}\leq 0,\,\,\, t>0.$$
As $\gamma+p-1>0,$ the function $E(t)=\|u(t,\cdot)\|_{L^{2}(\Omega)}$ is a sub solution of the equation \eqref{ode} and from this follows \eqref{k1}.
\end{proof}

\section{Applications}
In this section, we present the applications of the obtained results in the above sections to the time-fractional reaction-diffusion equations with nonlinear sources.
\subsection{Time-fractional Fisher-KPP-type equation}
We consider the time-fractional Fisher-KPP-type equation
\begin{equation}\label{a1}\partial^{\alpha}_{0+,t}v-t^\beta\Delta v=-v(1-v),\, t>0, x\in\Omega, 
\end{equation} supplemented with the Dirichlet boundary condition \eqref{1.3} and with the initial condition \eqref{1.2}. The equation \eqref{a1} can be obtained from the fractional Fischer-KPP equation $$\partial^{\alpha}_{0+,t}\tilde{v}-t^\beta\Delta \tilde{v}=\tilde{v}(1-\tilde{v}),$$ by transforming $\tilde{v}\longmapsto 1-v$.

Assume that, there is a global solution $v$ to problem \eqref{a1}, \eqref{1.2}, \eqref{1.3}. Then, repeating the proof technique from \cite{Kir2}, one can show that for $0<v(0,x)\leq 1,\,x\in \Omega,$ the global solution of problem \eqref{a1}, \eqref{1.2}, \eqref{1.3} satisfies
$$0< v(t,x)\leq 1,\,\,\, t>0,\, x\in \Omega.$$
We will study on the asymptotic behavior of the global solution for large times. Suppose that $0<v(0,x)\leq 1,\,x\in \Omega.$ Since $0< v(t,x)\leq 1,$ then we have $$-v(t,x) + v^2(t,x) \leq 0,\,\,\,\textit{for all}\,\,(t,x)\in (0,+\infty)\times\Omega.$$ Hence, $v$ satisfies the linear time-fractional diffusion inequality
$$\partial^{\alpha}_{0+,t}v-t^\beta\Delta v \leq 0,\, t>0, x\in\Omega,$$ with the Cauchy-Dirichlet conditions \eqref{1.2}, \eqref{1.3}.
Then, in view of $v$ is a sub solution of problem \eqref{1.1-1}, \eqref{1.2}, \eqref{1.3}, by the comparison principle (see \cite{Zacher2}), we obtain $$v(t,x)\leq u(t,x),\,t>0,\,x\in\Omega,$$ where $u$ is the solution of problem \eqref{1.1-1}, \eqref{1.2}, \eqref{1.3}. Consequently, from the results of Theorem \ref{th1} it follows that
\begin{align*}\|v(t,\cdot)\|_{L^2(\Omega)}\leq\frac{M_1}{1+\lambda_1t^{\alpha+\beta}},\,t> 0,\end{align*} where $M_1$ is a positive constant.

\subsection{Semilinear fractional porous medium equation}
Let us consider the time-fractional porous medium equation
\begin{equation}\label{pme1}
\partial^{\alpha}_{0+,t}w-t^\beta\Delta |w|^{m}w +\mu |w|^{p}w=0,\,(t,x)\in\Omega_+,
\end{equation}
with the Cauchy-Dirichlet conditions \eqref{1.2}, \eqref{1.3}. Here $\mu\in\mathbb{R},$ $m\geq 0$ and $p>1.$

Assume that there is a global positive solution $w>0$ to problem \eqref{pme1}, \eqref{1.2}, \eqref{1.3}. Then the equation \eqref{pme1} can be rewritten as
\begin{equation*}
\partial^{\alpha}_{0+,t}w-t^\beta\Delta w^{m} =-\mu w^{p},\,\,(t,x)\in\Omega_+,
\end{equation*} or, for $\mu>0$ in the form of a fractional differential inequality
\begin{equation*}
\partial^{\alpha}_{0+,t}w-t^\beta\Delta w^{m} \leq 0,\,(t,x)\in\Omega_+.
\end{equation*}
Hence, $w$ is a subsolution of problem \eqref{1.1-1}, \eqref{1.2}, \eqref{1.3} and
$$\|w(t,\cdot)\|_{L^{2}(\Omega)}\leq \frac{M_2}{1+t^{\frac{\alpha+\beta}{m+1}}},$$ where $M_2$ is a positive constant.

In the same way, one can show the decay estimates for more general evolution equations with non-negative nonlinear terms $f(s)\geq 0,\,s>0$, i.e.
$$\partial^{\alpha}_{0+,t} u(t,x) - a(t)\mathcal{A}(u(t,x)) + f(u(t,x))=0,\,\left({t,x}
\right) \in \Omega_+,$$
using the estimates we obtained in the previous sections. Here $\mathcal{A}$ one of the operators: the Laplacian, the porous medium operator, degenerate operator, p-Laplacian, mean curvature operator, Kirchhoff operator.

\section*{Conclusion and future perspectives}
The main object of the paper is to study the Cauchy-Dirichlet problem for the time-fractional evolution equation
$$\partial^{\alpha}_{0+,t} u(t,x) - a(t)\mathcal{A}(u(t,x)) =0,\,\left({t,x}
\right) \in \Omega_+.$$
Here we have assumed (except case $\alpha=1$) that $a(t)$ is bounded from below, i.e. $$a(t)\geq\kappa t^\beta,\,\kappa>0,\,\beta>-\alpha.$$
For some types of operator $\mathcal{A}$ such as: \emph{the Laplacian, the porous medium operator, degenerate operator, p-Laplacian, mean curvature operator, and Kirchhoff operator,} the decay estimates of solutions have been obtained.

As a continuation of this paper, we plan to study the equation \eqref{1.1}, on the whole Euclidean space $\mathbb{R}^n$, and intend to obtain the decay estimates of solutions.

Naturally, there are still unexplored objects in this direction. For example, it would be interesting to study the equation \eqref{1.1} with other degeneracies $a(t)$ than the hypothesis (H), for example when $a(t)$ represent a periodic function. The question of the optimality of the decay rates also remains open in general. In some cases (p-Laplacian, Porous medium), it can be proved by minimizing functionals, as in \cite{Lind}. This approach was previously applied in \cite{Zacher1} for the nondegenerate case. It seems interesting to extend the results obtained in this paper to other fractional derivatives, such as the distributed order fractional derivative, the fractional derivative with Sonine kernel, etc.

\section*{Declaration of competing interest}
	The Authors declares that there is no conflict of interest

\section*{Authors’ Contributions} A.S.: writing-original draft; B.T.: supervision, writing-review and editing. All authors gave final approval for publication and agreed to be held accountable for the work performed
therein.

\section*{Data Availability} Data sharing not applicable to this article as no datasets were generated or analysed during the current study

\section*{Acknowledgments}
This research has been funded by the Science Committee of the Ministry of Education and Science of the Republic of Kazakhstan (Grant No. AP09259578). BT is also supported by the FWO Odysseus 1 grant G.0H94.18N: Analysis and Partial Differential Equations and by the Methusalem programme of the Ghent University Special Research Fund (BOF) (Grant number 01M01021).

The authors would like to thank to the reviewers for their valuable comments
and remarks.

No new data was collected or generated during the course of research.

\end{document}